\def\BibTeX{{\rm B\kern-.05em{\sc i\kern-.025em b}\kern-.08em
    T\kern-.1667em\lower.7ex\hbox{E}\kern-.125emX}}
\newtheorem{theorem}{Theorem}
\newtheorem{lemma}{Lemma}
\newtheorem{corollary}{Corollary}
\newtheorem{assumption}{Assumption}
\newtheorem{remark}{Remark}
\newcommand{\inv}{^{-1}}
\newcommand{\nx}{n}
\begin{document}

\title{Efficient Quantum Algorithms for Nonlinear Stochastic Dynamical Systems}

\author{\IEEEauthorblockN{Abeynaya Gnanasekaran}
\IEEEauthorblockA{\textit{Raytheon Technologies Research Center} \\
Berkeley, California, USA \\
abeynaya.gnanasekaran@rtx.com}
\and
\IEEEauthorblockN{Amit Surana}
\IEEEauthorblockA{\textit{Raytheon Technologies Research Center} \\
East Hartford, Connecticut, USA \\
amit.surana@rtx.com}
\and
\IEEEauthorblockN{Tuhin Sahai}
\IEEEauthorblockA{\textit{SRI International} \\
Menlo Park, California, USA \\
tuhin.sahai@sri.com}
}

\maketitle

\begin{abstract}
In this paper, we propose efficient quantum algorithms for solving nonlinear stochastic differential equations (SDE) via the associated Fokker-Planck equation (FPE). We discretize the FPE in space and time using two well-known numerical schemes, namely Chang-Cooper and implicit finite difference. We then compute the solution of the resulting system of linear equations using the quantum linear systems algorithm.  We present detailed error and complexity analyses for both these schemes and demonstrate that our proposed algorithms, under certain conditions, provably compute the solution to the FPE within prescribed $\epsilon$ error bounds with polynomial dependence on state dimension $d$. Classical numerical methods scale exponentially with dimension, thus, our approach, under the aforementioned conditions, provides an \emph{exponential speed-up} over traditional approaches.
%Specifically, we consider the Chang-Cooper scheme which is a second order accurate and satisfies conservativeness and positivity of the solution, and implicit second order finite difference scheme.
\end{abstract}

\begin{IEEEkeywords}
Fokker-Planck Equation, Stochastic Differential Equations, Quantum algorithm, Linear systems, Chang-Cooper, Finite Difference.
\end{IEEEkeywords}

\section{Introduction}
\label{sec: intro}
%\footnote{RTX Proprietary. This document contains technical data subject to ECCN: EAR99}

Nonlinear stochastic differential equations (SDEs) are a popular framework for modeling real-world systems in the presence of uncertainty and stochasticity. SDEs have been used to model population growth and metabolic networks in mathematical biology, pricing of options and portfolio optimization in finance, weather, electrical networks, and even economic markets~\cite{oksendal2003stochastic, Rodkina2011}. Despite tremendous progress in state-of-the-art (SOA) algorithms for solving these equations on classical platforms, SOA methods are unable to address fundamental challenges related to curse-of-dimensionality, highly nonlinear dynamics, stiffness, and strongly coupled degrees of freedom over multiple length- and time- scales. Consequently, this limits the utility of these models for design, analysis, verification in real-world scenarios. Emerging computing platforms such as quantum computers present a unique opportunity to address these challenges. Specifically, quantum algorithms, by exploiting superposition and entanglement can provide polynomial- to exponential- acceleration over their classical counterparts. Consequently, quantum algorithms can potentially provide a new path to deal with some of these drawbacks.

Although quantum computers were originally envisioned for simulating quantum systems~\cite{feynman2018simulating}, recent work, in the area of algorithms, has demonstrated that they may be useful for simulating classical systems as well. For deterministic linear differential equations, it has been demonstrated that quantum algorithms offer the prospect of rapidly characterizing the solutions of high-dimensional systems of linear ODEs ~\cite{berry2014high,berry2017quantum,childs2020quantum}, and PDEs ~\cite{childs2021high,costa2019quantum,linden2020quantum,montanaro2016quantum}. These algorithms produce a quantum state proportional to the solution of a sparse (or block-encoded) $\nx$-dimensional system of linear differential equations in time $\textsf{poly}(\log(\nx))$), by using either the quantum linear system algorithm (QLSA) or direct Hamiltonian simulation. For deterministic nonlinear differential equations, a variety of different frameworks have been explored. For ODEs with quadratic polynomial nonlinearities, a quantum algorithm was proposed which simulates the system by storing multiple copies of the solution~\cite{leyton2008quantum}. The complexity of this approach is $\textsf{poly}(\log(\nx))$) in dimension but exponential in the evolution time $T$, scaling as $\mathcal{O}(\epsilon^{-T})$ (where, $\epsilon$  is allowed error in the solution). This is consequence of the requirement that one has to use an exponentially increasing number of copies to accurately capture the nonlinearity. To alleviate such challenges, techniques based on linear representation of dynamical systems are being extensively investigated. For example, by applying Carleman linearization it was shown that this exponential time dependence can be reduced to polynomial scaling in $T$ in certain settings \cite{liu2021efficient}, thereby providing an exponential improvement for those settings. Here the approach is based on applying truncated Carleman linearization to quadratic ODEs to transform them into a finite set of linear ODEs, and solving them using a combination of forward Euler numerical method and QLSA. This Carleman linearization based framework was recently extended for polynomial ODEs of arbitrary (finite) degree \cite{surana2022carleman}. Alternate linear representation techniques for dynamical systems such as Koopman-von Neumann mechanics, Liouville approaches ~\cite{jin2022time,joseph2020koopman,lin2022koopman}, and the Koopman framework for ergodic dynamical systems ~\cite{giannakis2022embedding} have also been proposed for simulating nonlinear ODEs on quantum platforms. For a comparison of advantages and disadvantages of these approaches, we refer the reader to ~\cite{jin2022time,lin2022koopman}.
%To the best of our knowledge, this work is the \emph{first} effort in developing quantum algorithms for simulating nonlinear SDEs or the closely related Fokker-Planck equation.

There is, however, limited work in developing quantum algorithms for simulating nonlinear SDEs or the closely related Fokker-Planck equation (FPE) \cite{evans2012introduction,risken1996fokker}. For example, recently authors in \cite{jin2022quantum} developed a Hamiltonian simulation based quantum algorithm for solving the FPE using a ``Schrodingerisation" approach which uses a novel warped phase transformation combined with Fourier spectral discretization. However, this framework is applicable only to gradient vector fields with periodic boundary conditions. In this paper, we develop an efficient quantum algorithm to solve FPE under more general condtions. Specifically, we consider a SDE driven by Brownian motion,
\begin{equation}\label{eq: sde}
dX_t = \mu(X_t,t)dt + \sigma(X_t,t) dW_t, \qquad X_{t_0} = X_0,
\end{equation}
where the state $X_t \in \mathbb{R}^d$ evolves according to a deterministic vector field $\mu(X_t,t) \in \mathbb{R}^d \rightarrow \mathbb{R}^d $ (also known as the drift term), and is subject to random increments proportional to a multi-dimensional Wiener process $dW_t$ with independent components \cite{evans2012introduction}. We assume that the diffusion matrix $\sigma(X_t, t): \mathbb{R}^{d \times l}, d \leq l$ is full rank. %The first step in our approach involves reformulating the stochastic ODE into the well known FPE.
The ``solution” to this SDE, is the probability density function (PDF) $\rho(x,t)$, whose evolution is governed by the FPE~\cite{risken1996fokker}. Although the FPE formulation is linear, and elegant and simple to state, it is well known that the methods for computing solutions to the problem suffer from a curse of dimensionality. The construction of classical algorithms for approximating the solution of the FPE in high dimensions has been an active area of research for the last few decades (for example see~\cite{chen2017beating,sun2014numerical,pmlr-v145-zhai22a}).

Our approach exploits the exponential speed-up afforded by the QLSA to address this inherent curse of dimensionality. In our proposed approach, we start by discretizing the Fokker-Planck equation in space and time using two well-known numerical schemes, namely the Chang-Cooper and implicit or backward Euler finite difference schemes. We then pose the problem of solving the resulting linear difference equations via the quantum linear systems algorithm (QLSA)~\cite{berry2017quantum}. We, consequently, call the two approaches the quantum linear systems Chang--Cooper algorithm (QLSCCA) and quantum linear systems finite difference algorithm (QLSFDA), respectively. QLSCCA and QLSFDA are both second order accurate. Additionally,  QLSCCA guarantees conservativeness and positivity of the solution. We provide detailed error and complexity analysis, proving that both the schemes, under certain conditions, can provably generate the solution to the FPE within prescribed $\epsilon$ error with polynomial scaling with respect to the state dimension $d$. Note that SOA classical numerical approaches with competing error tolerances exhibit exponential dependence on $d$.
%The \emph{second} approach, we explore in this work, uses direct quantum Hamiltonian simulation of the underlying FPE for the case that the drift term in associated SDE is a gradient vector field. For the gradient flow case, the resulting FPE can be written in a self-adjoint form that can be directly mapped to a Schr\"{o}dinger-type PDE. Once this PDE is discretized in space and a unitary dilation is applied, the resulting linear ODE system can be directly solved using a quantum implementation of the Lie-Trotter formula~\cite{de2008intermediate}.

The rest of the paper is organized as follows. In \Cref{sec: fp}, we discuss linear representation of SDEs using the FPE formulation. In \Cref{sec: qlsa_approach} and \Cref{sec:FD}, we develop our QLSCCA and QLSFDA schemes, respectively. We also analyze the associated error along with the query and gate complexities of the algorithm. We conclude and discuss future directions in \Cref{sec:conc}.

\section{The Fokker-Planck Equation}
\label{sec: fp}
%\textcolor{red}{TS: don't like this title, fix later}

The state of the stochastic process governed by (\ref{eq: sde}) can be characterized by the shape of its statistical distribution represented by the PDF. The evolution of the associated PDF is given by the FPE \cite{evans2012introduction,risken1996fokker},

\begin{equation}\label{eq: fp}
\begin{split}
\frac{\partial }{\partial t} \rho(x,t) &= - \sum_{i=1}^d \frac{\partial}{\partial x_i}  [\mu(x,t) \rho(x,t)] \\
&+ \sum_{i=1}^d\sum_{j=1}^d \frac{\partial^2}{\partial x_i \partial x_j}[D_{ij}(x,t) \rho(x,t)], \\
\rho(x,0) &= \delta(x-x_0).
\end{split}
\end{equation}
This linear partial differential equation is known as the Perron-Frobenius representation of (\ref{eq: sde}).
%This is a linear PDE that describes the evolution of the probability density function $\rho(x,t)$ under the influence of deterministic and random forces.
Note that, assuming $\Omega$ is a bounded subset in $\mathbb{R}^d$, the independent variables for the FPE lie in the domain $(x,t) \in \Omega \times (0, T)$,  $\rho(x,t)$ is the PDF, $\mu(x, t)$ is the SDE drift, and $D(x,t)$ is the diffusion tensor given by $D = \sigma \sigma^T / 2$. As a consequence of the assumption that $\sigma$ is a full rank matrix,  $D$ must be symmetric positive definite.

The FPE has been used in a wide range of applications in continuous~\cite{risken1996fokker} and discontinuous~\cite{sahai2013uncertainty} settings. Depending on the application, the boundary conditions for the FPE can take different forms. Let $S=\partial \Omega$ denote the boundary of $\Omega$. Common boundary conditions include absorbing ($\rho(S,t)=0$), reflecting (net probability flow across the boundary is zero), periodic $\rho(S^{+},t)=\rho(S^{-},t)$, and boundaries at infinity ($\displaystyle\lim_{x\to\infty} \rho(x,t)=0$). We refer the reader to~\cite{gardiner1985handbook} for descriptions of the various boundary conditions and associated properties.
%Note that our approach is flexible and can handle arbitrary boundary conditions.

A range of numerical methods for the FPE have been developed over the years. Numerical methods for simulating the FPE typically include explicit and implicit finite difference schemes, and finite element methods~\cite{pichler2013numerical}.  In what follows, we extend the Chang-Cooper \cite{chang1970practical,cc} and a second order implicit finite difference schemes for quantum settings.

\section{Chang-Cooper Scheme Based Quantum Algorithm}
\label{sec: qlsa_approach}

\subsection{Chang-Cooper Backward Euler Scheme}
\label{subsec: cc-scheme}

The FPE differs from a classic parabolic problem because of the following two constraints on the PDF; (1) Positivity: $\rho(x,t)\geq 0 \quad \forall (x,t) \in \Omega \times (0,T)$ (2) Conservativeness: $\int_{\Omega} \rho(x,t)dx = 1 \quad \forall t \in (0,T)$. Hence, a method that satisfies these constraints is expected to be more accurate.

The Chang-Cooper scheme requires that \cref{assumption1} given below are satisfied ~\cite{chang1970practical,cc}. For example, these assumptions hold true in plasma physics applicaitons ~\cite{chang1970practical}. We use the backward Euler method for time discretization for stability. We refer to this scheme as CC-BDF and is described below.  For the CC-BDF scheme, instead of using~(\ref{eq: fp}), it is more natural to consider the flux form for the FPE,
\begin{align}\label{eq:flux_fpe}
\frac{\partial }{\partial t} \rho(x,t)  = \nabla \cdot F(x,t),
\end{align}
where the flux in the \textit{i}-th direction is given by,
\begin{align}
F^i(x,t) &= M^i(x,t)\rho(x,t) + \sum_{j=1}^d D_{ij}(x,t) \frac{\partial }{\partial x_j} \rho(x,t), \\
M^i(x,t) &= \sum_{j=1}^d  \frac{\partial }{\partial x_j}D_{ij}(x,t) - \mu_i(x,t).\label{def:M}
\end{align}
Note that eqs~(\ref{eq: fp}) and~(\ref{eq:flux_fpe}) are exactly equivalent and we use the flux formulation for compactness of notation. We now summarize our assumptions.
% \textcolor{blue}{Can we add references to justify them}.
\begin{assumption}\label{assumption1}
Assume that
\begin{itemize}
  \item $M^i(x,t)$ is a positive function.
  \item There exists a constant $\gamma > 0$ such that $\sum_{i=1}^d |M^i(x+h,t) - M^i(x,t)| \leq \gamma h \quad \forall x \in \Omega$.
  \item $M^i(x,t)$ has compact support, i.e., $M^i(x,t)=0 \quad \forall x \notin \Omega$.
\end{itemize}
\end{assumption}
\begin{assumption}\label{assumption2}
For simplicity, we assume that the diffusion matrix $D$ is diagonal. Since $D$ is positive definite, $D_{ii}(x,t)>0$.
\end{assumption}
With above assumption we can simplify the flux expression as,
\begin{align*}
F^i(x,t) &= M^i(x,t)\rho(x,t) +  D^i(x,t) \frac{\partial }{\partial x_j} \rho(x,t), \\
M^i(x,t) &=  \frac{\partial }{\partial x_i}D_{ii}(x,t) - \mu_i(x,t) \\
D^i(x,t) &= D_{ii}(x,t).
\end{align*}

Without loss of generality, consider the domain $\Omega = [0, L]^d$. We consider a uniform mesh with mesh size $h$ and time step $\Delta t$. Let $j = (j_1, j_2, \cdots, j_d)$ be a multi-index for the spatial position, $e_i$ be the unit vector in the \textit{i}-th direction,  $x_j = jh$, and $t^n = n \Delta t$ (where $n$ is the time index). Then the CC-BDF scheme is as follows,
\begin{equation}
    \label{eq: cc-bdf}
\frac{\rho_j^{n+1} - \rho_j^n}{\Delta t} = \frac{1}{h} \sum_{i=1}^d F^{i,n}_{j+e_i/2} - F^{i,n}_{j-e_i/2}\,,
\end{equation}
where,
\begin{align*}
F^{i,n}_{j+e_i/2} &= \bigg( (1-\delta_j^{i,n})M^{i,n}_{j+e_i/2} + \frac{1}{h} D^{i,n}_{j+e_i/2}\bigg)\rho_{j+e_i}^{n+1} \\
& \quad - \bigg( \frac{1}{h} D^{i,n}_{j+e_i/2}-\delta_j^{i,n} M^{i,n}_{j+e_i/2} \bigg)\rho_j^{n+1}\\
\delta_j^{i,n} &= \frac{1}{w_j^{i,n}} - \frac{1}{\exp w_j^{i,n} -1}, \quad w^{i,n}_j = h \frac{M^{i,n}_{j+e_i/2}}{ D^{i,n}_{j+e_i/2}}.
\end{align*}
% On further simplification one gets,
% \begin{align}
% \label{eq: cc-bdf}
% \rho_j^n & = \bigg(1+ \frac{\Delta t}{h^2}  \sum_{i=1}^d \big(D^{i,n}_{j+e_i/2} W^{i,n}_j + D^{i,n}_{j-e_i/2} W^{i,n}_{j-e_i} \exp w_{j-e_i}^{i,n} \big)\bigg) \rho^{n+1}_{j}  \nonumber \\
% &- \frac{\Delta t}{h^2} \sum_{i=1}^d D^{i,n}_{j+e_i/2} W^{i,n}_j \exp w_j^{i,n} \rho^{n+1}_{j+e_i}  \nonumber\\
% &- \frac{\Delta t}{h^2} \sum_{i=1}^d D^{i,n}_{j-e_i/2} W^{i,n}_{j-e_i} \rho^{n+1}_{j-e_i}
% \end{align}
% where,
% \begin{align}
%     W_j^{i,n} = \frac{w_j^{i,n}}{(\exp w_j^{i,n} -1 )}.
%     \label{eq:weights}
% \end{align}
Zero-flux boundary conditions are required to guarantee conservativeness of the scheme. Therefore, $F^{i,n}_{j^i_{-1/2}} = 0$ and  $F^{i,n}_{j^i_{N+1/2}} = 0$ where $j^i_k = (j_1, j_2, \cdots, j_{i-1},k, j_{i+1}, \cdots, j_d)$. Using this, we get the following conditions on the values of $\rho^{n+1}$ at the boundary,
\begin{align}
\label{eq: cc-bdf bdy}
\rho^{n+1}_{j^i_0} \exp w^{i,n}_{j^i_{-1}} &= \rho^{n+1}_{j^i_{-1}}, \quad \rho^{n+1}_{j^i_N} \exp (-w^{i,n}_{j^i_{N}}) = \rho^{n+1}_{j^i_{N+1}}.
\end{align}

\begin{theorem}\label{thm:cc}
\cite{cc_analysis} If $\Delta t \leq \frac{1}{2\gamma}$, the discretization scheme preserves positivity, conservativeness, stability, and converges with an error of order $\mathcal{O}(dh^2 + \Delta t)$.
\end{theorem}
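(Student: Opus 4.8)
The statement bundles four claims; I would establish them in the order positivity, conservativeness, stability, convergence, since each leans on the previous ones. The starting point is to write the implicit update \cref{eq: cc-bdf} in matrix form $(\mathbf{I}-\Delta t\,A^n)\rho^{n+1}=\rho^n$, where $A^n$ encodes the spatial stencil, $\rho^n$ is the vector of grid values at time $t^n$, and $B:=\mathbf{I}-\Delta t\,A^n$. For positivity my plan is to prove $B$ is a nonsingular M-matrix, so that $B^{-1}\ge 0$ entrywise and positivity of $\rho^{n+1}=B^{-1}\rho^n$ follows by induction from $\rho^0\ge 0$. First I would verify $B$ is a Z-matrix: the off-diagonal coefficients $h^{-2}D^{i,n}_{j+e_i/2}W^{i,n}_j\exp w^{i,n}_j$ and $h^{-2}D^{i,n}_{j-e_i/2}W^{i,n}_{j-e_i}$ are nonnegative because $D^i>0$, $W^{i,n}_j=w/(e^w-1)>0$, and $\exp w>0$ for every real $w$ — this sign-robustness, independent of the local P\'eclet number, is exactly the property the Chang--Cooper weighting is designed to deliver. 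Next I would compute the row sums of $B$ using the identity $W(e^w-1)=w$, which collapses each directional contribution to $h^{-1}(M^{i,n}_{j+e_i/2}-M^{i,n}_{j-e_i/2})$; by \cref{assumption1} the sum over $i$ of these differences is bounded by $\gamma$, so every row sum of $B$ is at least $1-\gamma\Delta t$. Under $\Delta t\le 1/(2\gamma)$ this is $\ge 1/2>0$, which together with the Z-structure gives strict diagonal dominance and hence the M-matrix property — this is where the time-step restriction is actually consumed. The boundary rows, modified by the zero-flux conditions \cref{eq: cc-bdf bdy}, would be checked separately but inherit the same structure.

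For conservativeness I would sum the flux form $(\rho_j^{n+1}-\rho_j^n)/\Delta t=h^{-1}\sum_i(F^{i,n}_{j+e_i/2}-F^{i,n}_{j-e_i/2})$ over all grid indices; the directional flux differences telescope, leaving only boundary fluxes, which vanish by \cref{eq: cc-bdf bdy}. Hence $\sum_j\rho_j^{n+1}=\sum_j\rho_j^n$, so the discrete mass is conserved. Stability is then immediate in the $\ell^1$ norm: positivity and conservation give $\|\rho^{n+1}\|_1=\sum_j\rho_j^{n+1}=\sum_j\rho_j^n=\|\rho^n\|_1$, so the scheme is non-expansive and unconditionally $\ell^1$-stable.

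For the $\mathcal{O}(dh^2+\Delta t)$ error I would use the consistency-plus-stability template. The local truncation error $\tau^n$ comes from inserting the exact smooth solution into the scheme and Taylor expanding: backward Euler contributes $\mathcal{O}(\Delta t)$ in time, while the exponentially weighted stencil is constructed so that the first-order spatial terms cancel, leaving $\mathcal{O}(h^2)$ per direction and $\mathcal{O}(dh^2)$ after summing the $d$ coordinates. Writing $e^n$ for the difference between exact and numerical grid values yields the recursion $Be^{n+1}=e^n+\Delta t\,\tau^n$. From the M-matrix analysis, $B^{-1}\ge 0$ together with row sums $\ge 1-\gamma\Delta t$ forces $\|B^{-1}\|_\infty\le(1-\gamma\Delta t)^{-1}\le 1+2\gamma\Delta t$ under $\Delta t\le 1/(2\gamma)$; iterating over $n=T/\Delta t$ steps and summing the amplification $(1+2\gamma\Delta t)^n\le e^{2\gamma T}$ closes a discrete Gr\"onwall estimate and gives the global bound $\mathcal{O}(dh^2+\Delta t)$ on $[0,T]$.

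I expect the convergence step, and specifically the truncation-error computation, to be the main obstacle. Verifying genuine second-order spatial accuracy requires expanding $W^{i,n}_j=w/(e^w-1)$ and $\delta^{i,n}_j$ in powers of $h$ (recalling $w=\mathcal{O}(h)$) and confirming that the drift--diffusion balance cancels the $\mathcal{O}(h)$ term; this delicate cancellation is what separates Chang--Cooper from a naive upwind discretization. The remaining care is to align the norm in which $\tau^n$ is measured with the norm in which $\|B^{-1}\|$ is controlled, so that the Gr\"onwall constant stays finite as $h,\Delta t\to 0$.
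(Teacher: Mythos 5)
A preliminary point: the paper does not prove this theorem at all --- it is quoted from \cite{cc_analysis}, and the proof is deferred to that reference entirely. So your proposal cannot be compared against an in-paper argument, only judged on its own terms and against the fragments of the same computation that appear elsewhere in the paper. On those terms, your outline is the standard route and is essentially sound. The Z-matrix plus strict-diagonal-dominance structure you identify for $B=\mathbf{I}-\Delta t\,A^n$ (which is exactly the paper's matrix $A^n$) is precisely what \Cref{lem: SDD} later establishes for the complexity analysis: your row-sum collapse via the identity $W(e^w-1)=w$ to $h^{-1}\bigl(M^{i,n}_{j+e_i/2}-M^{i,n}_{j-e_i/2}\bigr)$, bounded using \Cref{assumption1}, is line-for-line the paper's row-dominance computation, and your telescoping/zero-flux conservation argument is equivalent to its column-sum computation (column sums exactly $1$, i.e.\ $\mathbf{1}^{T}A^n=\mathbf{1}^{T}$). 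The M-matrix route to positivity, the $\ell^1$ non-expansiveness, and the Varah-type bound $\|B^{-1}\|_\infty\le(1-\gamma\Delta t)^{-1}$ feeding a discrete Gr\"onwall estimate are all correct ingredients. Three minor slips: the time-step restriction is not ``consumed'' at the M-matrix step (strict dominance needs only $\Delta t<1/\gamma$; the factor $2$ is what keeps the Gr\"onwall amplification $e^{2\gamma T}$ under control); stability is therefore not ``unconditional,'' since the $\ell^1$ identity presupposes positivity, which is conditional; and the boundary rows do not merely ``inherit the same structure'' --- their dominance computation genuinely requires the compact-support assumption on $M^i$, as the boundary part of the proof of \Cref{lem: SDD} makes explicit.

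The one genuine gap is the step you flag yourself: consistency. Everything in your outline except the claim $\tau^n=\mathcal{O}(dh^2+\Delta t)$ is bookkeeping; the actual content of the theorem --- second-order spatial accuracy despite the exponential upwind-like weighting --- lives entirely in the Taylor expansion of $\delta^{i,n}_j$ and $W^{i,n}_j$ in powers of $w^{i,n}_j=\mathcal{O}(h)$ and the verification that the $\mathcal{O}(h)$ terms cancel. As written, your proposal asserts this cancellation rather than proving it, so what you have is a proof of (conditional) positivity, conservation, and stability, plus a reduction of convergence to an unproved consistency lemma. To claim the full statement you would need to carry out that expansion --- or do what the paper does and cite \cite{cc_analysis} for it.
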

We refer the readers to~\cite{cc_analysis} for the proof.

\subsection{Quantum Linear Systems Chang--Cooper Algorithm}
\label{subsec: QLSA}

We now describe and analyze our novel quantum linear systems Chang--Cooper algorithm (QLSCCA). We can rewrite the discretization scheme given in  (\ref{eq: cc-bdf}) in matrix form as follows,
\begin{equation}\label{eq: matrix_form}
\rho^{n} = A^n \rho^{n+1},
\end{equation}
where $A^n$ is an $(N+1)^d \times (N+1)^d$ matrix with the elements,
\begin{align} \label{eq: matrix_entries}
A^n_{pq} =
\begin{cases}
&-\alpha^{i,n}_p, \quad q=p+(N+1)^{i-1},\,\,i = 1, \dots, d,\\
&\beta^{n}_p, \quad\quad\,\,  q=p,\\
&-\gamma^{i,n}_p, \quad q=p-(N+1)^{i-1},\,\,i = 1, \dots, d\\
&0, \quad \text{otherwise}
\end{cases}
\end{align}
where,
\begin{align}\label{eq:weights}
\alpha^{i,n}_p &= \frac{\Delta t}{h^2} D^{i,n}_{j+e_i/2}W_j^{i,n}\exp w_j^{i,n}, \nonumber\\
\beta^n_p &= 1+ \frac{\Delta t}{h^2} \sum_{i=1}^d (D^{i,n}_{j+e_i/2} W_j^{i,n}\nonumber \\
& \qquad \qquad \qquad + D^{i,n}_{j-e_i/2} W^{i,n}_{j-e_i} \exp w_{j-e_i}^{i,n} ), \nonumber\\
\gamma^{i,n}_p &= \frac{\Delta t}{h^2} D^{i,n}_{j-e_i/2} W_{j-e_i}^{i,n}, \nonumber \\
W_j^{i,n} &= \frac{w_j^{i,n}}{(\exp w_j^{i,n} -1 )}.
\end{align}
and $j$ is the corresponding multi-index to $p$, i.e., $p = \sum_{i=1}^d j_i(N+1)^{i-1}$. The rows corresponding to the points at the boundary of the domain are modified using (\ref{eq: cc-bdf bdy}). Consider an arbitrary point on the boundary where row $m$ corresponds to a point $b = (b_1, b_2, \dots, b_d)$ such that $b_{i_1} = b_{i_2} = \dots , b_{i_{d_1}} = 0$ and $b_{j_1} = b_{j_2} = \dots = b_{j_{d_2}}= N$ (where $\{i_1,i_2,\hdots, i_{d_1}\}\cup\{j_1,j_2,\hdots,j_{d_2}\}\subseteq\{1,2,\hdots,d\}$), then we modify $\beta^n_m$ as,
\begin{align} \label{eq: bdy}
\beta^n_m &= \beta^n_m - \sum_{k=1}^{d_1} \gamma^{i_k,n}_m \exp w_{m-e_{i_k}}^{i_k,n} \nonumber - \sum_{l=1}^{d_2} \alpha^{i_l,n}_m \exp (-w^{i_l,n}_m)
\nonumber \\
&=\beta^n_m - \frac{\Delta t}{h^2} \sum_{k=1}^{d_1} D^{i_k,n}_{m-e_{i_k}/2} W_{m-e_{i_k}}^{i_k,n} \exp w^{i_k,n}_{m-e_{i_k}} \nonumber \\
& \qquad \quad -\frac{\Delta t}{h^2} \sum_{l=1}^{d_2} D^{i_l,n}_{m+e_{i_l}/2} W_{m}^{i_l,n}.
\end{align}
% \textcolor{red}{something is broken in equation 11. Please fix.}

We can solve
(\ref{eq: matrix_form}) at every time step $n=0,1,...N_t-1$. Equivalently, we can vertically concatenate $\rho^n$ into a vector $\rho = \begin{bmatrix}\rho^1 ; \rho^2 ; \cdots ; \rho^{N_t} \end{bmatrix}$ and solve the following linear system,
\begin{equation}
\label{eq: qlsa_lin_sys}
L \rho = f,
\end{equation}
where,
\[
L = \begin{bmatrix}
A^0 \\
-I & A^1 &  & \makebox(0,0){\text{\Large0}}\\
& -I & A^2 & \\
& \makebox(0,0){\text{\Large0}} & \ddots & \ddots \\
& & &  -I & A^{N_t-1}
\end{bmatrix}, \quad f = \begin{bmatrix}
\rho_0 \\ 0 \\ 0 \\ \vdots \\ 0
\end{bmatrix}.
\]
Since $L$ is not a Hermitian matrix in general, we solve the system of equations by using a dilated Hermitian matrix of the form,
\[
\Tilde{L} = \begin{bmatrix}
0 & L^H \\
L & 0 \\
\end{bmatrix},
\]
which enlarges the matrix dimension by a factor of 2. Using the QLSA, we produce a normalized solution state $|L^{-1}f\rangle$ within error $\epsilon$. While there are many variants of QLSA, we use the QLSA proposed in~\cite{qlsa_optimal} which has the best known query/gate complexity as stated below.
\begin{theorem}\label{thm:qlsa}
\cite{qlsa_optimal}
Let $Ax=b$ be a system of linear equations, where $A$ is an $N\times N$ matrix with sparsity $s$ and condition number $\kappa$. Given an oracle, that computes the locations and values of non-zero entries of operator $A$ and an oracle that prepares $|b\rangle$, there exists a quantum algorithm (referred to as QLSA) that produces the normalized state $|A^{-1}b\rangle$ to within error $\epsilon$  in terms of the $l_2$ norm, using an average number of oracle calls,
\[
\mathcal{O}(s \kappa \log \frac{1}{\epsilon}).
\]
\end{theorem}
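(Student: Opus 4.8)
Since \Cref{thm:qlsa} is quoted verbatim from prior work rather than established here, I would not reproduce the original argument but instead sketch the modern route that yields the stated optimal scaling. The overall plan is to realize the operator $A^{-1}$ as a bounded polynomial in $A$ that can be implemented coherently, apply it to the prepared state $|b\rangle$, and then renormalize the result. Everything hinges on controlling three sources of error — the embedding of $A$ into a unitary, the polynomial approximation of the inverse, and the final state extraction — and, crucially, on keeping the cost \emph{linear} in the condition number.

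\textbf{Step 1 (block-encoding from sparse access).} First I would combine the two sparse-access oracles (entries and locations) into a single unitary that block-encodes $A/\alpha$ with subnormalization $\alpha = \mathcal{O}(s)$, using the bound $\|A\| \le s \max_{pq}|A_{pq}|$ for an $s$-sparse matrix. The standard quantum-walk construction achieves this with only $\mathcal{O}(1)$ oracle calls \emph{per application}, so the entire $s$-dependence of the final bound enters through $\alpha$. After normalizing $\|A\|=1$ (the statement's $\kappa$ is scale-invariant), the singular values of the block-encoded matrix occupy a narrow band near the origin whose lower endpoint is $\Theta(1/(s\kappa))$; the relevant \emph{effective} condition number is therefore $\mathcal{O}(s\kappa)$.

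\textbf{Step 2 (polynomial inversion and QSVT).} Next I would invoke the approximation-theory fact that $1/x$, restricted to $[\,1/(s\kappa),\,1\,]$ and away from the origin, admits an odd polynomial approximant of degree $D = \mathcal{O}(s\kappa \log(1/\epsilon))$ with uniform error $\epsilon$ — obtainable, for instance, by discretizing the Gaussian integral representation of $1/x$ or truncating a Chebyshev expansion. Quantum singular value transformation then applies this polynomial to the singular values of the block-encoding, producing a block-encoding of a subnormalized $A^{-1}$ using $D$ applications of the block-encoding unitary. Applying the result to the state prepared by the $|b\rangle$ oracle and postselecting the block-encoding ancilla yields a state proportional to $A^{-1}b$, and amplitude amplification produces the normalized $|A^{-1}b\rangle$. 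The total error is then bounded by a routine triangle inequality over the block-encoding error, the polynomial truncation error, and the amplification error, each driven below $\mathcal{O}(\epsilon)$.

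\textbf{Main obstacle.} The delicate point is the dependence on the effective condition number in the final amplification. Because the success amplitude is only $\Theta(1/(s\kappa))$, naive amplitude amplification contributes another factor of $s\kappa$ and gives the suboptimal bound $\mathcal{O}(s^2\kappa^2\log(1/\epsilon))$. Achieving the advertised linear scaling $\mathcal{O}(s\kappa\log(1/\epsilon))$ is the crux of the theorem and requires a more refined procedure — variable-time amplitude amplification in the spirit of Ambainis, which amplifies the fast-converging large-singular-value components separately so that the overall cost adds rather than multiplies, or equivalently a discrete-adiabatic filtering scheme. I expect that verifying the error and cost budget of this variable-time stage, together with maintaining uniformity of the polynomial approximation across the whole spectral band down to $1/(s\kappa)$, is the technically hardest part of the argument; the remainder is standard bookkeeping.
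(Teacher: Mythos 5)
The paper never proves \Cref{thm:qlsa}: it is imported as a black box, and the ``proof'' in the text is only a pointer to the cited reference. So your decision not to reproduce the argument matches the paper's treatment, and what you offer instead is a reconstruction of how the cited bound is obtained. As a high-level map it is broadly right: sparse-access oracles give a block-encoding of $A/\alpha$ with $\alpha=\mathcal{O}(s)$ at $\mathcal{O}(1)$ queries per use, the inverse is implemented as a bounded odd polynomial of degree $\mathcal{O}(s\kappa\log(1/\epsilon))$ via QSVT, and the crux is indeed avoiding a multiplicative factor of $\kappa$ from the final state extraction.

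Two technical points are off, and the second is a genuine gap. First, the bookkeeping of the naive cost: with $\|A\|=1$, every singular value $x_i$ of $A/s$ lies in $[1/(s\kappa),\,1/s]$, so the scaled inverse polynomial satisfies $p(x_i)\approx 1/(2s\kappa x_i)\geq 1/(2\kappa)$ across the whole band; the post-selection amplitude is therefore $\Omega(1/\kappa)$, not $\Theta(1/(s\kappa))$, and naive amplitude amplification gives $\mathcal{O}(s\kappa^2\log(1/\epsilon))$, not $\mathcal{O}(s^2\kappa^2\log(1/\epsilon))$. Second, and more substantively, the two ``refined procedures'' you present as interchangeable are not: the variable-time amplitude amplification route (Ambainis, and Childs--Kothari--Somma) yields $\mathcal{O}\bigl(s\kappa\,\mathrm{polylog}(s\kappa/\epsilon)\bigr)$, which carries extra polylogarithmic factors and is strictly weaker than the clean $\mathcal{O}(s\kappa\log(1/\epsilon))$ quoted in the theorem. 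The stated scaling is achieved only by the discrete-adiabatic construction of the cited work, whose structure is not ``QSVT-invert then amplify'' at all: it runs a qubitized walk along an interpolating Hamiltonian whose final ground space encodes $A^{-1}b$, at query cost $\mathcal{O}(s\kappa)$ \emph{independent} of $\epsilon$, and then applies eigenstate filtering at cost $\mathcal{O}(s\kappa\log(1/\epsilon))$; the ``average number'' of queries in the statement reflects the repeat-until-success filtering step. So if your sketch were expanded along the VTAA lines you describe, it would establish a weaker theorem than the one quoted; reaching the quoted bound requires the adiabatic argument specifically, not as an equivalent alternative but as the essential mechanism.
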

\begin{proof}
Please see~\cite{qlsa_optimal} for the detailed proof.
\end{proof}
Thus, the query complexity of the QLSA is $\mathcal{O}(s \kappa \log \frac{1}{\epsilon})$, and the gate complexity is larger than the query complexity only by a logarithmic factor~\cite{qlsa_optimal}.
%\textcolor{blue}{Add discussion on why complexity depends on sparsity. Question: if we can remove dependence on sparsity by block encoding, why we will not use that approach. That will reduce overall QLSACC complexity to $d^2$ from $d^3$?.}

We apply the QLSA to, now
\begin{equation}
\label{eq: qlsa_lin_sys_exd}
L_e \rho = f_e,
\end{equation}
where,
\[
L_e = \begin{bmatrix}
A^0 \\
-I & A^1 & \\
& -I & A^2 & &  & \makebox(0,0){\text{\huge0}}\\
& & \ddots & \ddots \\
&  \makebox(0,0){\text{\huge0}} & &  -I & A^{N_t-1}  \\
& & &  & -I & I  \\
& & & & \ddots & \ddots \\
& & & & & & -I & I
\end{bmatrix}, \]
$\rho_e = \begin{bmatrix}\rho^1 ; \rho^2 ; \cdots ; \rho^{N_t} ; \rho^{N_t} ; \cdots ; \rho^{N_t} \end{bmatrix}$ and $f_e = \begin{bmatrix}
\rho_0 ; 0 ; 0; \cdots ; 0
\end{bmatrix}$. The condition number of the extended system will be of the same order of magnitude and hence will not affect the query and gate complexity discussed later. The extended system can be expressed in quantum form as,
\begin{equation}\label{eq: qlsa_lin_sys_exdq}
\mathcal{L}_e|\rho_e\rangle = |f_e\rangle,
\end{equation}
where,
\begin{equation}
\mathcal{L}_e=\sum_{j=0}^{N_t-1}|j\rangle \langle j|\otimes A^j+\sum_{j=N_t}^{2N_t-1}|j\rangle \langle j|\otimes I-\sum_{j=1}^{2N_t-1}|j\rangle\langle j-1|\otimes I, \label{eq:L}
\end{equation}
\begin{equation}
|\rho_e\rangle=\sum_{j=0}^{2N_t-1}\rho^{j+1}|j\rangle,\text{ and } |f_e\rangle=\frac{1}{\|\rho_0\|}\rho_0 |0\rangle\notag.\label{eq:fe}
\end{equation}
Here, we have used the standard ``bra" $|\cdot\rangle$ and ``ket" $\langle \cdot |$ notation to represent a quantum state and its conjugate transpose, respectively \cite{nielsen2002quantum}.
Finally, the steps involved in our proposed quantum algorithm, which we refer to as QLSCCA, are as follows:
\begin{itemize}
  \item For given $\epsilon$, choose $h,\Delta t$ based on \Cref{thm:err} and prepare $|f_e\rangle$ (\ref{eq:fe}) by an appropriate unitary transformation and $\mathcal{L}_e$ (\ref{eq:L}) by block encoding.
  \item Apply QLSA algorithm to the linear system~(\ref{eq: qlsa_lin_sys_exdq}) and obtain the approximate solution,
  \[
   |\rho_q\rangle=\sum_{j=0}^{2N_t-1} \rho^j_q|j\rangle.
  \]
  \item Post-select on $j$ for some $j\in\{N_t,N_t+1,\cdots,2N_t\}$ to obtain the approximate normalized solution $\frac{\rho^{N_t}_q}{\|\rho^{N_t}_q\|}$ of the Fokker-Planck equation at time $T$. We note that, in our approach, $\rho^{N_t}$ is replicated $N_t$ times to increase the probability of measuring $\rho^{N_t}$.
\end{itemize}
Using the extended system~(\ref{eq: qlsa_lin_sys_exd}) with the post-selection, QLSCCA produces the desired solution with a constant $\Omega(1)$ probability. Next, we present detailed error and complexity analysis of the QLSCCA.

\subsection{Error Analysis of QLSCCA}

\begin{theorem}\label{thm:err}
Let $\rho(T)$ be the solution of the FPE~(\ref{eq: fp}) and $\frac{\rho^{N_t}_q}{\|\rho^{N_t}_q\|}$ be the solution obtained by the QLSCCA, then for every $0\leq \epsilon \leq 1$, there exist choices for $h$ and $\Delta t$, such that,
\begin{equation}\label{err1}
\bigg\|\frac{\rho(T)}{\|\rho(T)\|}-\frac{\rho^{N_t}_q}{\|\rho^{N_t}_q\|}\bigg\|\leq \epsilon.
\end{equation}
\end{theorem}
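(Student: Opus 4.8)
The plan is to decompose the total error by the triangle inequality into a classical discretization contribution and a quantum contribution (from the QLSA together with the post-selection step), bound each separately, and then fix $h$, $\Delta t$ and the QLSA tolerance so that their sum does not exceed $\epsilon$. Introducing the \emph{exact} grid solution $\rho^{N_t}$ of the linear system~(\ref{eq: matrix_form}) as an intermediate object, I would write
\begin{equation*}
\bigg\|\frac{\rho(T)}{\|\rho(T)\|}-\frac{\rho^{N_t}_q}{\|\rho^{N_t}_q\|}\bigg\|
\leq
\bigg\|\frac{\rho(T)}{\|\rho(T)\|}-\frac{\rho^{N_t}}{\|\rho^{N_t}\|}\bigg\|
+
\bigg\|\frac{\rho^{N_t}}{\|\rho^{N_t}\|}-\frac{\rho^{N_t}_q}{\|\rho^{N_t}_q\|}\bigg\|,
\end{equation*}
so that the first term is purely the Chang--Cooper discretization error and the second is the error incurred by solving the discretized system with the QLSA and extracting the time-$T$ block.

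For the first term I would invoke \Cref{thm:cc}, which (under $\Delta t \le 1/(2\gamma)$) gives $\|\rho(T)-\rho^{N_t}\| = \mathcal{O}(dh^2+\Delta t)$. To pass from this absolute bound to the normalized difference I would use the elementary inequality $\bigl\|\tfrac{u}{\|u\|}-\tfrac{v}{\|v\|}\bigr\|\le \tfrac{2\|u-v\|}{\max(\|u\|,\|v\|)}$, valid for nonzero $u,v$; here any common scaling factor relating the grid representation to the continuous solution cancels in the ratio, and it suffices to lower bound one of $\|\rho(T)\|$, $\|\rho^{N_t}\|$, which follows from conservativeness ($\int_\Omega\rho=1$) together with mild regularity of the density. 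Choosing, e.g., $h=\mathcal{O}(\sqrt{\epsilon/d})$ and $\Delta t=\mathcal{O}(\epsilon)$ then makes this term at most $\epsilon/2$.

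For the second term I would recall from \Cref{thm:qlsa} that the QLSA returns a normalized state $\rho_q/\|\rho_q\|$ within $\epsilon_{\mathrm{QLSA}}$ (in $\ell_2$) of the exact normalized solution $\rho/\|\rho\|$ of the extended system~(\ref{eq: qlsa_lin_sys_exd}). The key observation is that post-selecting a block $j\in\{N_t,\dots,2N_t-1\}$ and renormalizing is a projection-plus-rescaling operation, so closeness of the full states transfers to closeness of the extracted block up to a factor $1/\sqrt{P}$, where $P$ is the probability that post-selection lands in the replicated region. Because the extended construction repeats $\rho^{N_t}$ exactly $N_t$ times, and conservativeness keeps the per-block norms comparable, $P=\Omega(1)$; hence this term is $\mathcal{O}(\epsilon_{\mathrm{QLSA}})$ and can be driven below $\epsilon/2$ by taking $\epsilon_{\mathrm{QLSA}}=\Theta(\epsilon)$, which by \Cref{thm:qlsa} costs only an extra $\log(1/\epsilon)$ factor in query complexity.

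The main obstacle I anticipate is precisely the post-selection/renormalization step in the second term: I must rigorously lower bound the success probability $P$ away from zero (the replication trick is engineered for exactly this) and verify that projecting onto a single block and renormalizing cannot amplify the $\epsilon_{\mathrm{QLSA}}$ error by more than an $\mathcal{O}(1)$ factor. A secondary but recurring technical point, shared with the first term, is the uniform lower bound on the $\ell_2$-norms $\|\rho(T)\|$ and $\|\rho^{N_t}\|$ needed to control the normalization inequality; I would derive it from the conservativeness guaranteed by \Cref{thm:cc} together with regularity of the density, taking care that the resulting bounds remain robust as the dimension $d$ grows.
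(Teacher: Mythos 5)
Your overall decomposition is exactly the paper's: introduce the exact discrete solution $\rho^{N_t}_c$ of~(\ref{eq: qlsa_lin_sys}) as an intermediate, bound the Chang--Cooper term via \Cref{thm:cc} and the quantum term via \Cref{thm:qlsa}, and split $\epsilon$ between the two. (Your normalization inequality $\bigl\|u/\|u\|-v/\|v\|\bigr\|\le 2\|u-v\|/\max(\|u\|,\|v\|)$ is just a compact form of the two-step estimate the paper carries out explicitly in (\ref{eq:rat2})--(\ref{eq:inq}).) There is, however, one genuine gap: your plan to close the discretization term by lower-bounding $\|\rho(T)\|$ (or $\|\rho^{N_t}\|$) ``from conservativeness plus mild regularity'' does not work. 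Conservativeness fixes the $\ell_1$-type normalization, $\|\rho(T)\|_1=1$, but the norm in the theorem is the $\ell_2$ norm, and for a probability vector on $(N+1)^d$ grid points one only has $\|\rho(T)\|_2\ge (N+1)^{-d/2}$, which is exponentially small in $d$ for any spread-out density; no regularity assumption yields a dimension-robust constant lower bound. Consequently your concrete choices $h=\mathcal{O}(\sqrt{\epsilon/d})$, $\Delta t=\mathcal{O}(\epsilon)$ leave the normalized discretization error at $\mathcal{O}(\epsilon/\|\rho(T)\|)$, which need not be $\le\epsilon/2$.

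The fix is the one the paper uses, and it stays entirely within your framework: since the theorem only asserts that suitable $h,\Delta t$ \emph{exist}, they may depend on $q=\|\rho(T)\|$. Choosing $\Delta t$ and $dh^2$ proportional to $\epsilon\|\rho(T)\|/C$ (with a case split on $\gamma$ to preserve $\Delta t\le 1/(2\gamma)$, as \Cref{thm:cc} requires) makes the \emph{relative} error $C(\Delta t+dh^2)/\|\rho(T)\|\le\epsilon/4$ no matter how small $\|\rho(T)\|$ is; the price of a small norm is then paid in runtime, which is exactly why the factor $1/q$ appears in \Cref{thm: complexity}. On the quantum term, your treatment is actually more careful than the paper's own proof: the paper simply stipulates that the QLSA delivers the final-time block to accuracy $\epsilon/2$ (inequality (\ref{errQ})) and never discusses the $1/\sqrt{P}$ amplification incurred by post-selection, whereas you do. Be aware, though, that your justification of $P=\Omega(1)$ (``conservativeness keeps the per-block norms comparable'') has the same $\ell_1$-versus-$\ell_2$ confusion: conservativeness equalizes the blocks' $\ell_1$ norms, while the post-selection probability is governed by their $\ell_2$ norms, which can still differ substantially (e.g., for a density spreading from a near-delta initial condition); a rigorous statement needs an explicit no-decay assumption on $\|\rho^n\|_2$, something the paper also omits when it asserts $\Omega(1)$ success probability.
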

\begin{proof}
Let $\rho^{N_t}_{q}$ be the approximate solution of~(\ref{eq: qlsa_lin_sys}) obtained by QLSA, such that,
\begin{equation}\label{errQ}
\|\frac{\rho^{N_t}_c}{\|\rho^{N_t}_c\|}-\frac{\rho^{N_t}_q}{\|\rho^{N_t}_q\|}\|\leq \epsilon/2.
\end{equation}
where, $\rho^{N_t}_{c}$ is the exact solution of the discretized system of equations given in~(\ref{eq: qlsa_lin_sys}). By \Cref{thm:cc}, for $\Delta t\leq \frac{1}{2\gamma}$, $\rho^{N_t}_{c}$ satisfies,
\begin{equation}\label{errCC}
\|\rho(T)-\rho^{N_t}_{c}\|\leq C(\Delta t+dh^2),
\end{equation}
where, without loss of generality, we assume $C>1$.
%as QLSA produces $\epsilon/2$ in solving the linear system.
%In the end we need to  show:
%\begin{equation}\label{err1}
%\bigg\|\frac{\rho(T)}{\|\rho(T)\|}-\frac{\rho^{N_t}_q}{\|\rho^{N_t}_q\|}\bigg\|\leq \epsilon
%\end{equation}
%To do that, note
We next show how to select $h$  and $\Delta t$, such that,
\begin{equation}\label{errCC1}
\frac{\|\rho(T)-\rho^{N_t}_{c}\|}{\|\rho(T) \|}\leq \frac{C(\Delta t+dh^2)}{\|\rho(T) \|}\leq \epsilon/4.
\end{equation}
%\begin{equation}\label{err11}
%\frac{\|\rho(T)-\rho^{N_t}_{c}\|}{\|\rho(T) \|}\leq \frac{C(\Delta t+dh^2)}{\|\rho(T) \|}.
%\end{equation}
We consider two cases,
\begin{itemize}
  \item If $\gamma \geq \frac{4C}{\epsilon \|\rho(T)\|}$, choose $\Delta t = \frac{1}{2\gamma}$ and $d\,h^2 = \frac{\epsilon \|\rho(T) \|}{4C} - \frac{1}{2\gamma} $.
  \item Otherwise, choose $\Delta t = dh^2 = \frac{\epsilon \|\rho(T) \|}{8C\gamma}$.
\end{itemize}
Since $\|\rho(T) \| \equiv \|\rho(T) \|_2\leq \|\rho(T) \|_1=1$, in either case, it follows that $\Delta t\leq \frac{1}{2\gamma}$. Thus, by using~(\ref{errCC}) we conclude that inequality (\ref{errCC1}) holds.

Furthermore, since,
\begin{equation}%\label{eq:rat1}
\|\rho(T)\|=\|\rho(T)+\rho^{N_t}_{c}-\rho^{N_t}_{c}\|\leq \|\rho^{N_t}_{c}\|+\|\rho(T)-\rho^{N_t}_{c}\|,\notag
\end{equation}
we get,
\begin{equation}%\label{eq:rat2}
\frac{\|\rho^{N_t}_{c}\|}{\|\rho(T)\|}\geq \frac{\|\rho(T)\|-\|\rho(T)-\rho^{N_t}_{c}\|}{\|\rho(T)\|}\geq 1-\frac{\|\rho(T)-\rho^{N_t}_{c}\|}{\|\rho(T) \|},\notag
\end{equation}
and finally using~(\ref{errCC1}), we obtain
\begin{equation}\label{eq:rat2}
\bigg\|1-\frac{\|\rho^{N_t}_{c}\|}{\|\rho(T)\|}\bigg \|\leq \frac{\|\rho(T)-\rho^{N_t}_{c}\|}{\|\rho(T) \|}\leq \epsilon/4.
\end{equation}
Thus,
\begin{eqnarray}\label{eq:inq}
&&\bigg\|\frac{\rho^{N_t}_c}{\|\rho^{N_t}_c\|}-\frac{\rho(T)}{\|\rho(T)\|}\bigg \| \notag \\ %&=&\bigg\|\frac{\rho^{N_t}_c}{\|\rho^{N_t}_c\|}-\frac{\rho^{N_t}_c}{\|\rho(T)\|}+\frac{\rho^{N_t}_c}{\|\rho(T)\|}-\frac{\rho(T)}{\|\rho(T)\|}\bigg\|\notag\\
&\leq&\bigg\|\frac{\rho^{N_t}_c}{\|\rho^{N_t}_c\|}-\frac{\rho^{N_t}_c}{\|\rho(T)\|}\bigg\|+\bigg\|\frac{\rho^{N_t}_c-\rho(T)}{\|\rho(T)\|}\bigg\|\notag\\
%&\leq &\bigg\|1-\frac{\|\rho^{N_t}_c\|}{\|\rho(T)\|}\bigg\|+\frac{\|\rho^{N_t}_c-\rho(T)\|}{\|\rho(T)\|}\notag\\
&\leq & \epsilon/4+\epsilon/4=\epsilon/2.
\end{eqnarray}
Finally, using~(\ref{errQ}) and~(\ref{eq:inq}),
\begin{eqnarray}\label{errp}
&&\bigg\|\frac{\rho(T)}{\|\rho(T)\|}-\frac{\rho^{N_t}_q}{\|\rho^{N_t}_q\|}\bigg\| \notag \\
%&=&\bigg\|\frac{\rho(T)}{\|\rho(T)\|}-\frac{\rho^{N_t}_c}{\|\rho^{N_t}_c\|}+\frac{\rho^{N_t}_c}{\|\rho^{N_t}_c\|}-\frac{\rho^{N_t}_q}{\|\rho^{N_t}_q\|}\bigg\|\notag\\
&\leq& \bigg\|\frac{\rho(T)}{\|\rho(T)\|}-\frac{\rho^{N_t}_c}{\|\rho^{N_t}_c\|}\bigg\|+\bigg\|\frac{\rho^{N_t}_c}{\|\rho^{N_t}_c\|}-\frac{\rho^{N_t}_q}{\|\rho^{N_t}_q\|}\bigg\|\notag\\
&\leq&\epsilon/2+\epsilon/2=\epsilon,
\end{eqnarray}
we obtain the desired result.
\end{proof}

\subsection{Complexity Analysis of QLSCCA}
\label{subsec: cc-complexity}
We present the query and gate complexity of the algorithm for solving~(\ref{eq: qlsa_lin_sys}) in~\Cref{thm: complexity}. The proof for this theorem relies on~\Cref{lem: SDD} and~\Cref{cor: Ainv}. The complexity results are stated in terms of number of accesses to oracles, analogous to~\Cref{thm:qlsa}.

\begin{lemma} \label{lem: SDD}
If $\Delta t < \frac{1}{\gamma}$, matrix $A^n$ in (\ref{eq: matrix_form}) is strictly diagonally dominant (SDD) in both rows and columns.
\end{lemma}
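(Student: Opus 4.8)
The plan is to read off strict diagonal dominance directly from the explicit entries in \cref{eq: matrix_entries}, using a single algebraic identity for the Chang--Cooper weights. First I would note that all magnitudes are as expected: since $D^{i,n}>0$ and $W^{i,n}_j=w^{i,n}_j/(\exp w^{i,n}_j-1)>0$ for every real argument, the off-diagonal coefficients $\alpha^{i,n}_p,\gamma^{i,n}_p$ are nonnegative and $\beta^n_p\ge 1>0$. Consequently $|A^n_{pp}|=\beta^n_p$, the absolute row off-diagonal sum is $\sum_{i=1}^d(\alpha^{i,n}_p+\gamma^{i,n}_p)$, and (collecting the entries in column $p$, which come from the grid points $j-e_i$ and $j+e_i$) the absolute column off-diagonal sum is $\sum_{i=1}^d(\alpha^{i,n}_{j-e_i}+\gamma^{i,n}_{j+e_i})$. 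Thus SDD in rows and in columns each reduce to one scalar inequality comparing these sums with $\beta^n_p$.

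The engine of both estimates is the weight identity $W^{i,n}_j\exp w^{i,n}_j=W^{i,n}_j+w^{i,n}_j$, which follows at once from the definition of $W^{i,n}_j$ in \cref{eq:weights}, together with $D^{i,n}_{j+e_i/2}\,w^{i,n}_j=hM^{i,n}_{j+e_i/2}$ from the definition of $w^{i,n}_j$. Applying these rewrites each exponential-weighted product as a ``plain'' $DW$ term plus $hM^i$. For the row inequality I would form $\beta^n_p-\sum_i(\alpha^{i,n}_p+\gamma^{i,n}_p)$ and observe that after substitution the $D^{i,n}_{j\pm e_i/2}W^{i,n}$ contributions cancel in pairs, leaving
\[
\beta^n_p-\sum_{i=1}^d(\alpha^{i,n}_p+\gamma^{i,n}_p)=1+\frac{\Delta t}{h}\sum_{i=1}^d\big(M^{i,n}_{j-e_i/2}-M^{i,n}_{j+e_i/2}\big).
\]
Since the two arguments differ by $h$ in the $i$-th coordinate, the Lipschitz bound in \cref{assumption1} gives $\sum_i|M^{i,n}_{j+e_i/2}-M^{i,n}_{j-e_i/2}|\le\gamma h$, so the right-hand side is at least $1-\gamma\Delta t$, which is strictly positive precisely when $\Delta t<1/\gamma$. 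This proves row dominance.

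The column inequality is cleaner: substituting $\alpha^{i,n}_{j-e_i}=\frac{\Delta t}{h^2}D^{i,n}_{j-e_i/2}W^{i,n}_{j-e_i}\exp w^{i,n}_{j-e_i}$ and $\gamma^{i,n}_{j+e_i}=\frac{\Delta t}{h^2}D^{i,n}_{j+e_i/2}W^{i,n}_j$, one sees that the per-direction column sum is identically the $i$-th summand appearing inside $\beta^n_p$, so $\beta^n_p-\sum_i(\alpha^{i,n}_{j-e_i}+\gamma^{i,n}_{j+e_i})=1>0$ with no restriction on $\Delta t$. Hence column dominance holds with surplus exactly $1$.

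The main obstacle is the boundary rows, for which $\beta^n_p$ is replaced by $\hat\beta^n_m$ in \cref{eq: bdy} and some off-diagonal couplings are removed after eliminating the ghost values through the zero-flux conditions \cref{eq: cc-bdf bdy}. For those rows I would repeat the telescoping bookkeeping, now checking that the terms folded into the diagonal by the boundary condition leave a nonnegative surplus; here the compact-support hypothesis on $M^i$ (third item of \cref{assumption1}), which forces $M^i$ and hence $w^{i,n}$ to vanish near $\partial\Omega$, is what guarantees the modified diagonal still dominates. Verifying this boundary bookkeeping carefully --- rather than the clean interior computation --- is where the real care is needed.
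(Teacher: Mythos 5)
Your proposal is correct and takes essentially the same route as the paper: your weight identity $W^{i,n}_j\exp w^{i,n}_j = W^{i,n}_j + w^{i,n}_j$ is exactly the paper's step $W^{i,n}_j(\exp w^{i,n}_j-1)=w^{i,n}_j$, followed by the same conversion $D^{i,n}_{j+e_i/2}w^{i,n}_j = hM^{i,n}_{j+e_i/2}$, the same Lipschitz bound giving the row surplus $1-\gamma\Delta t>0$, and the same exact surplus of $1$ for columns. The only difference is that you sketch the boundary-row bookkeeping rather than executing it, but the mechanism you identify (compact support of $M^i$ making the half-grid values outside $\Omega$ vanish, so the folded boundary terms recombine into the full Lipschitz sum) is precisely what the paper's explicit boundary computation uses.
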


\begin{proof}
To show that $A^n$ is SDD in columns, we prove that $|A^n_{pp}|- \sum_{q\neq p} |A^n_{qp}|> 0, \quad \forall p\in [1,(N+1)^d]$. Consider the column corresponding to any interior point $j$ in the mesh,
\begin{flalign*}
&|A^n_{pp}|- \sum_{q\neq p} |A^n_{qp}| \\
&= 1+ \frac{\Delta t}{h^2} \sum_{i=1}^d (D^{i,n}_{j+e_i/2} W_j^{i,n} + D^{i,n}_{j-e_i/2} W^{i,n}_{j-e_i} \exp w_{j-e_i}^{i,n} )  \\
&- \frac{\Delta t}{h^2}  \sum_{i=1}^{d} ( D^{i,n}_{j-e_i/2}W_{j-e_i}^{i,n}\exp w_{j-e_i}^{i,n} -   D^{i,n}_{j+e_i/2} W_{j}^{i,n}) \\
&= 1 > 0.
\end{flalign*}
The same result also holds true at the boundary points and can be shown using (\ref{eq: matrix_entries}) and (\ref{eq: bdy}).
%We can show that the same result holds at the boundary as well. WLOG, consider a column $p$ corresponding to a boundary point  $b = (b_1, b_2, \dots, b_d)$ such that $b_k=0 \quad \forall k \in \{1, 2, \dots d_1\}$ and $b_j=N \quad \forall j \in \{d_1+1, \dots, d_2\}$. Then using \ref{eq: matrix_entries,eq: bdy},
%\begin{align*}
%|A^n_{pp}|- \sum_{q\neq p} |A^n_{qp}| &= \hat{\beta_p^n} - \frac{\Delta t}{h^2}\sum_{i=d_1+d_2+1}^d (D^{i,n}_{b+e_i/2} W_b^{i,n} + D^{i,n}_{b-e_i/2} W^{i,n}_{b-e_i} \exp w_{b-e_i}^{i,n})\\
%&= 1+ \frac{\Delta t}{h^2} \sum_{i=1}^d (D^{i,n}_{j+e_i/2} W_j^{i,n} + D^{i,n}_{j-e_i/2} W^{i,n}_{j-e_i} \exp w_{j-e_i}^{i,n} ) \\
%& - \frac{\Delta t}{h^2}  \sum_{i=1}^{d} ( D^{i,n}_{j-e_i/2}W_{j-e_i}^{i,n}\exp w_{j-e_i}^{i,n}) -  \frac{\Delta t}{h^2} \sum_{i=1}^d ( D^{i,n}_{j+e_i/2} W_{j}^{i,n}) \\
%&= 1 \\
%&> 0.
%\end{align*}

Similarly, to show that $A$ is SDD in rows, consider the row corresponding to any interior point $j$ in the mesh,
\begin{flalign*}
&|A^n_{pp}|- \sum_{q\neq p} |A^n_{pq}| \\
&= 1+ \frac{\Delta t}{h^2} \sum_{i=1}^d (D^{i,n}_{j+e_i/2} W_j^{i,n} + D^{i,n}_{j-e_i/2} W^{i,n}_{j-e_i} \exp w_{j-e_i}^{i,n} ) \\
& - \frac{\Delta t}{h^2}  \sum_{i=1}^{d} ( D^{i,n}_{j+e_i/2}W_{j}^{i,n}\exp w_{j}^{i,n}+   D^{i,n}_{j-e_i/2} W_{j-e_i}^{i,n}) \\
% &= 1 - \frac{\Delta t}{h^2} \sum_{i=1}^d \big(D^{i,n}_{j+e_i/2} W_j^{i,n} (\exp w_j^{i,n}-1) - D^{i,n}_{j-e_i/2} W^{i,n}_{j-e_i} (\exp w_{j-e_i}^{i,n} -1) \big)  \\
&= 1-  \frac{\Delta t}{h^2} \sum_{i=1}^d (D^{i,n}_{j+e_i/2} w_j^{i,n} - D^{i,n}_{j-e_i/2} w^{i,n}_{j-e_i} )  \\
&= 1-  \frac{\Delta t}{h} \sum_{i=1}^d (M^{i,n}_{j+e_i/2} - M^{i,n}_{j-e_i/2} )\\
&\geq 1 - \gamma \Delta t > 0.
\end{flalign*}
using the definition of $W_j^{i,n}$ and Lipschitz continuity of $M^i(x,t)$. To show that the same result also holds at the boundary, we need the assumption that $M^i(x,t)$ has compact support. Consider an arbitrary point $b = (b_1, b_2, \dots, b_d)$ on the boundary such that $b_{i_1} = b_{i_2} = \dots , b_{i_{d_1}} = 0$ and $b_{j_1} = b_{j_2} = \dots = b_{j_{d_2}}= N$ (where $\mathcal{I} \coloneqq \{i_1,i_2,\hdots, i_{d_1}\},  \mathcal{J} \coloneqq \{j_1,j_2,\hdots,j_{d_2}\} \text{, and } \mathcal{I} \cup \mathcal{J}\subseteq\{1,2,\hdots,d\}$).
% For example, consider a point $j$ that is at the the boundary such that $j_1=0$, $j_2=N$ and $j_k = {1, \hdots, N-1},\quad\forall k=3,\cdots, d$. \textcolor{red}{TS: why for this point? Can we make this for arbitrary boundary points?}
Then using (\ref{eq: bdy}) and following the same procedure, we can show
\begin{flalign*}
&|A^n_{pp}|- \sum_{q\neq p} |A^n_{pq}| \geq 1 - \gamma \Delta t > 0
\end{flalign*}
where we have used the compact support assumption on $M^{i,n}, M^{j,n}$.
%\begin{flalign*}
%&|A^n_{pp}|- \sum_{q\neq p} |A^n_{pq}| \\
%% &= 1-  \frac{\Delta t}{h^2} \sum_{k \notin \mathcal{I}, k \notin \mathcal{J} } (D^{k,n}_{b+e_k/2} w_b^{k,n} - D^{k,n}_{b-e_k/2} w^{k,n}_{b-e_k} )  \\
%% &+ \sum_{i \in \mathcal{I}}\frac{\Delta t}{h^2} (D^{i,n}_{b+e_i/2} W_b^{i,n} + D^{i,n}_{b-e_i/2} W^{i,n}_{b-e_i} \exp w_{b-e_i}^{i,n} - D^{i,n}_{b-e_i/2} W_{b-e_i}^{i,n}\exp w_{b-e_i}^{i,n} - D^{i,n}_{b+e_i/2} W^{i,n}_{b} \exp w_{b}^{i,n} ) \\
%% &+  \sum_{j \in \mathcal{J}} \frac{\Delta t}{h^2} (D^{j,n}_{b+e_j/2} W_b^{j,n} + D^{j,n}_{b-e_j/2} W^{j,n}_{b-e_j} \exp w_{b-e_j}^{j,n} - D^{j,n}_{b+e_j/2} W_{b}^{j,n} - D^{j,n}_{b-e_j/2} W^{j,n}_{b-e_j}) \\
%&= 1-  \frac{\Delta t}{h^2} \sum_{k \notin \mathcal{I}, k \notin \mathcal{J} } \big(D^{k,n}_{b+e_k/2} w_b^{k,n} - D^{k,n}_{b-e_k/2} w^{k,n}_{b-e_k} \big)\\
%& \qquad - \frac{\Delta t}{h^2} \bigg( \sum_{i \in \mathcal{I}}  D^{i,n}_{b+e_i/2}  w_b^{i,n} +  \sum_{j \in \mathcal{J}}  D^{j,n}_{b-e_j/2} w^{j,n}_{b-e_j} \bigg)  \\
%&= 1 - \frac{\Delta t}{h} \sum_{k \notin \mathcal{I}, k \notin \mathcal{J} } (M^{k,n}_{b+e_k/2}  - M^{k,n}_{b-e_k/2} \big) \\
%& \qquad - \frac{\Delta t}{h} \bigg(\sum_{i \in \mathcal{I}}  M^{i,n}_{b+e_i/2} +  \sum_{j \in \mathcal{J}}  M^{j,n}_{b-e_j/2} \bigg)\\
%&= 1-  \frac{\Delta t}{h} \sum_{k=1}^d (M^{k,n}_{b+e_k/2} - M^{k,n}_{b-e_k/2} ) \\
%&\geq 1 - \gamma \Delta t > 0,
%\end{flalign*}
%where we have used the compact support assumption on $M^{i,n}, M^{j,n}$ in the second to last step.
\end{proof}

\begin{corollary} \label{cor: A}
$\|A^n \|_2 \leq 2\,\frac{\Delta t}{h^2} (C^n \, h\,d +2d) +1$, where $C^n>0$ is a time-dependent constant.
\end{corollary}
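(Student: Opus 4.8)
The plan is to bound the spectral norm through the elementary inequality $\|A^n\|_2 \le \sqrt{\|A^n\|_1\,\|A^n\|_\infty}$, so that it suffices to control the maximum absolute column sum $\|A^n\|_1$ and the maximum absolute row sum $\|A^n\|_\infty$. Under \Cref{assumption1,assumption2} every nonzero entry $\alpha^{i,n}_p,\beta^n_p,\gamma^{i,n}_p$ is nonnegative (indeed $D^{i,n}>0$, $w^{i,n}>0$, and hence $0<W^{i,n}<1$), so these $\ell_1$/$\ell_\infty$ sums are just the sums of the entries, and the row/column computations already carried out in \Cref{lem: SDD} can be reused almost verbatim.

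The key algebraic simplification is the identity $W^{i,n}_j\exp w^{i,n}_j = W^{i,n}_j + w^{i,n}_j$, which follows immediately from $W=w/(e^{w}-1)$, together with $D^{i,n}_{j+e_i/2}\,w^{i,n}_j = h\,M^{i,n}_{j+e_i/2}$ coming from the definition $w^{i,n}_j = hM^{i,n}_{j+e_i/2}/D^{i,n}_{j+e_i/2}$. These turn every exponentially weighted diffusion coefficient of the form $D^{i,n}W^{i,n}\exp w^{i,n}$ into a plain diffusion piece $D^{i,n}W^{i,n}$ plus a first-order drift piece $hM^{i,n}$.

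Concretely, for an interior column $p$ the SDD computation of \Cref{lem: SDD} already gives $\sum_{q\neq p}|A^n_{qp}| = \beta^n_p - 1$, so the full column sum equals $2\beta^n_p-1 = 1 + 2\frac{\Delta t}{h^2}\sum_{i=1}^d\big(D^{i,n}_{j+e_i/2}W^{i,n}_j + D^{i,n}_{j-e_i/2}W^{i,n}_{j-e_i} + hM^{i,n}_{j-e_i/2}\big)$, where the last summand results from applying the identity above to the $\exp w^{i,n}_{j-e_i}$ factor. An analogous direct computation for the row sum yields the same structure with the drift evaluated at the opposite face. I would then bound each diffusion factor using $0<W^{i,n}<1$ and $D^{i,n}\le \max_{i,x}D^{i}(x,t^n)$, of which there are $2d$ (the two neighbours in each of the $d$ directions), and each drift factor by $C^n := \max_{i,x} M^{i}(x,t^n)$, of which there are $d$; this gives $\|A^n\|_1,\|A^n\|_\infty \le 1 + 2\frac{\Delta t}{h^2}(C^n h d + 2d)$, and hence the claimed bound on $\|A^n\|_2$ via the square-root inequality. (The diffusion maximum $\max_{i,x}D^i(x,t^n)$ is absorbed into the stated constant.)

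The remaining point is the boundary rows and columns, which are modified according to \cref{eq: bdy}. Here I would follow the boundary argument of \Cref{lem: SDD} line for line: the compact-support assumption on $M^{i}$ (\Cref{assumption1}) makes the extra flux terms cancel against the modified diagonal $\hat\beta^n_m$, so the boundary sums are no larger than the interior ones. The only genuine obstacle is the presence of the factors $\exp w^{i,n}$, which a priori could be large; the identity $W e^{w} = W + w$ is precisely what tames them, splitting each into an $O(1)$ diffusive contribution (bounded since $W<1$) and an $O(h)$ advective contribution, and it is exactly this split that produces the two separate terms $2d$ and $C^n h d$ in the estimate.
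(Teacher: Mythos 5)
Your proposal is correct and follows essentially the same route as the paper: both bound $\|A^n\|_2$ by $\sqrt{\|A^n\|_1\,\|A^n\|_\infty}$, reuse the row/column sums already computed in \Cref{lem: SDD}, and tame the $\exp w^{i,n}$ factors via the elementary fact $e^w > 1+w$ --- your identity $W e^w = W + w$ together with $D^{i,n}w^{i,n} = hM^{i,n}$ is just a cleaner packaging of the paper's step based on $(e^x+1)/(e^x-1)\le 1+2/x$. One shared caveat worth noting: in your bound (and, implicitly, in the paper's) the $2d$ term actually carries the factor $\max_{i,x} D^i(x,t^n)$, which cannot literally be ``absorbed'' into the constant $C^n$ multiplying $h\,d$ (that would force $C^n\sim 1/h$ whenever $\max D^i>1$), but this looseness is inherited from the paper's own proof and is not a gap specific to your argument.
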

\begin{proof}
From~\Cref{lem: SDD}, $A^n$ is SDD in rows and columns. Then,
\begin{align*}
\|A^n\|_\infty &= \max_p \left(|A^n_{pp}| + \sum_{q\neq p}|A^n_{qp}|\right) \nonumber \\
&\leq  2\, \max_p |A^n_{pp}| -1, \\
\|A\|_1 &= \max_p \left(|A^n_{pp}| + \sum_{q\neq p}|A^n_{pq}|\right) \nonumber \\
&= 2\,\max_p |A^n_{pp}| -1,
\end{align*}

Define $Q^n$ as,
\begin{align*}
Q^n &= \max_j \sum_{i=1}^d (D^{i,n}_{j+e_i/2} W_j^{i,n} + D^{i,n}_{j-e_i/2} W^{i,n}_{j-e_i} \exp w_{j-e_i}^{i,n} )\\
&\leq \sum_{i=1}^d \Big(\frac{C_1^{i,n}h}{\exp (C_2^{i,n} h)-1} + \frac{C_3^{i,n} h \exp (C_4^{i,n} h)}{\exp (C_4^{i,n} h)-1}\Big)\\
& \leq C^{n}\,d\, h \, \frac{\exp(C^{n}h)+1}{\exp(C^nh)-1} \\
& \leq C^n\,d\, h \, \Big(1+ \frac{2}{C^nh}\Big) = d (C^nh + 2),
\end{align*}
where $C_1^{i,n}, C_2^{i,n}, C_3^{i,n}, C_4^{i,n}$ and $C^n$ are some finite positive constants that capture the maximum values of $D^{i,n}_{j+e_i/2}$ and $D^{i,n}_{j-e_i/2}$. For the first step, we use the definition in~(\ref{eq:weights}) and on the last step we use the fact that $(e^x +1)/(e^x-1) \leq (1+2/x),\,\, \forall x>0$. Then,
\begin{align*}
\|A^n\|_\infty &\leq  2\,  \frac{\Delta t}{h^2} Q^n +1 , \qquad \|A^n\|_1 \leq 2\,\frac{\Delta t}{h^2} Q^n +1, \\
\|A^n\|_2 &\leq \sqrt{\|A\|_\infty \|A\|_1} \leq 2\,\frac{\Delta t}{h^2} Q^n +1
\end{align*}
\end{proof}

\begin{corollary} \label{cor: Ainv}
$\|(A^n)\inv \|_2 \leq \frac{1}{\sqrt{1-\gamma \Delta t}}$.
\end{corollary}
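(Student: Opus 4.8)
The plan is to control $\|(A^n)\inv\|_2$ through the two ``off-diagonal'' operator norms $\|(A^n)\inv\|_1$ and $\|(A^n)\inv\|_\infty$, using the standard inequality $\|B\|_2 \leq \sqrt{\|B\|_1\,\|B\|_\infty}$. Each of these one-sided norms can be bounded directly from the strict diagonal dominance already established in \Cref{lem: SDD}, by way of the classical Varah bound: if a matrix is strictly diagonally dominant by rows with dominance margin $\alpha \coloneqq \min_p\big(|A_{pp}| - \sum_{q\neq p}|A_{pq}|\big) > 0$, then $\|A\inv\|_\infty \leq 1/\alpha$, and the analogous column statement gives $\|A\inv\|_1 \leq 1/\beta$, where $\beta$ is the column margin. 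The key point will be that the row and column margins of $A^n$ are \emph{different}, and exploiting each in the appropriate norm produces exactly the square-root form in the statement.

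First I would record the two dominance margins, both of which are computed inside the proof of \Cref{lem: SDD}. The column computation there yields the \emph{exact} value $|A^n_{pp}| - \sum_{q\neq p}|A^n_{qp}| = 1$ for every $p$, so the column margin is $\beta = 1$; the row computation yields $|A^n_{pp}| - \sum_{q\neq p}|A^n_{pq}| \geq 1 - \gamma\Delta t$, so the row margin satisfies $\alpha \geq 1-\gamma\Delta t$, which is strictly positive precisely under the hypothesis $\Delta t < 1/\gamma$.

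Next I would justify the Varah bound itself, which is short. Writing $\|A\inv\|_\infty = \big(\min_{y\neq 0}\|Ay\|_\infty/\|y\|_\infty\big)\inv$ via the substitution $y = A\inv x$, it suffices to show $\|Ay\|_\infty \geq \alpha\|y\|_\infty$ for all $y \neq 0$. Choosing an index $k$ with $|y_k| = \|y\|_\infty$ and applying the triangle inequality to the $k$-th row gives $|(Ay)_k| \geq |A_{kk}||y_k| - \sum_{q\neq k}|A_{kq}||y_q| \geq \big(|A_{kk}| - \sum_{q\neq k}|A_{kq}|\big)|y_k| \geq \alpha\|y\|_\infty$, which proves the claim. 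The column version then follows by applying the row version to $(A^n)^T$, since $\|A\inv\|_1 = \|(A^T)\inv\|_\infty$ and $A^T$ is row-dominant with margin equal to the column margin of $A$.

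Finally I would combine the two estimates. With $\alpha = 1-\gamma\Delta t$ for the rows and $\beta = 1$ for the columns,
\[
\|(A^n)\inv\|_2 \leq \sqrt{\|(A^n)\inv\|_1\,\|(A^n)\inv\|_\infty} \leq \sqrt{1 \cdot \tfrac{1}{1-\gamma\Delta t}} = \frac{1}{\sqrt{1-\gamma\Delta t}}.
\]
I do not anticipate a genuine obstacle, since every dominance estimate needed is already available from \Cref{lem: SDD}; the only point requiring care is to use the \emph{exact} column margin $1$ (rather than the weaker row margin $1-\gamma\Delta t$) for the $\|\cdot\|_1$ factor, as this is what removes a second factor of $(1-\gamma\Delta t)^{-1/2}$ and produces the stated square-root bound instead of the cruder $1/(1-\gamma\Delta t)$.
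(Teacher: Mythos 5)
Your proof is correct and follows essentially the same route as the paper: the paper's proof simply invokes a cited inequality $\|(A^n)^{-1}\|_2 \leq 1/\sqrt{\alpha\beta}$ for matrices strictly diagonally dominant in both rows and columns, and then plugs in the exact column margin $1$ and the row margin $1-\gamma\Delta t$ from \Cref{lem: SDD}, exactly as you do. The only difference is that where the paper cites this bound, you derive it from scratch via Varah's bound applied to $A^n$ and $(A^n)^T$ together with $\|B\|_2 \leq \sqrt{\|B\|_1\,\|B\|_\infty}$, which makes the argument self-contained but does not change its substance.
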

\begin{proof}
From~\Cref{lem: SDD}, $A^n$ is SDD in rows and columns. Then, \[
\|(A^n)\inv\|_2 \leq \frac{1}{\sqrt{\alpha \beta}},
\]
where $\alpha = \min_p \left(|A^n_{pp}|-\sum_{q \neq p} |A^n_{qp}|\right)$ and $\beta = \min_p \left(|A^n_{pp}|-\sum_{q \neq p} |A^n_{pq}|\right)$~\cite{sigma_bound}. From the proof of~\Cref{lem: SDD}, we see that $\alpha = 1$ and $\beta \geq 1-\gamma \Delta t$.
\end{proof}

%\begin{theorem} \label{thm: complexity}
%Let $\Delta t \leq \frac{1}{2\gamma}$. For a given error tolerance $\epsilon$, the query complexity of the QLSA %algorithm~\cite{qlsa_optimal} for the linear system \ref{eq: qlsa_lin_sys} is,
%\[
%\mathcal{O}\bigg(\frac{e^{\gamma T}}{\gamma} \frac{d^3}{\epsilon}  \log \frac{1}{\epsilon}\bigg).
%\]
%The gate complexity is larger than the query complexity only by logarithmic factors.
%\end{theorem}

\begin{theorem} \label{thm: complexity}
%Let $\Delta t \leq \frac{1}{2\gamma}$ and $q=\|\rho(T)\|$.
For a given error tolerance $\epsilon<1$, the query complexity of QLSCCA is,
\[
\mathcal{O}\bigg(\frac{e^{\gamma T}}{\gamma} \frac{d^3}{q\epsilon}  \log \frac{1}{\epsilon}\bigg),
\]
where, $q=\|\rho(T)\|$, and it produces a normalized solution to the FPE (\ref{eq: fp}) within $\epsilon$ ($l_2$ error) with a constant probability $\Omega(1)$ of success. The gate complexity is larger than the query complexity only by a logarithmic factor.
\end{theorem}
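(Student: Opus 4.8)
The plan is to instantiate the generic QLSA bound of \Cref{thm:qlsa}, namely $\mathcal{O}(s\kappa\log\frac{1}{\epsilon})$, for the concrete extended system \cref{eq: qlsa_lin_sys_exd}, so that the whole proof reduces to controlling the sparsity $s$, the condition number $\kappa=\|L_e\|_2\,\|L_e^{-1}\|_2$, and then substituting the discretization parameters $h,\Delta t$ fixed by the error analysis of \Cref{thm:err}. First I would record the sparsity: by \cref{eq: matrix_entries} each block $A^n$ carries the diagonal entry $\beta^n_p$ together with $d$ super- and $d$ sub-block couplings, i.e. $2d+1$ nonzeros per row, and the block coupling $-I$ adds one more; passing to the Hermitian dilation $\tilde L$ neither changes the per-row count nor the singular values, so $s=\mathcal{O}(d)$ and $\kappa(\tilde L)=\kappa(L_e)$. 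For the forward factor I would use $\|L_e\|_2\leq\sqrt{\|L_e\|_1\|L_e\|_\infty}$ and note that each block row contributes at most $\|A^n\|_2+1$ (the replication rows contribute only $2$), so \Cref{cor: A} yields $\|L_e\|_2=\mathcal{O}\big(\frac{\Delta t}{h^2}\,d\big)$.

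The crux is $\|L_e^{-1}\|_2$. Since $L_e$ is block lower-bidiagonal, with diagonal blocks $A^0,\dots,A^{N_t-1},I,\dots,I$ and subdiagonal $-I$, its inverse is block lower-triangular and the $(j,k)$ block is the ordered forward-substitution product $D_j^{-1}D_{j-1}^{-1}\cdots D_k^{-1}$, hence a product of at most $N_t$ factors of the form $(A^n)^{-1}$. \Cref{cor: Ainv} bounds every factor by $(1-\gamma\Delta t)^{-1/2}$, so each block has norm at most $(1-\gamma\Delta t)^{-N_t/2}$. The decisive step is to tame this potentially exploding product: using $N_t\Delta t=T$ together with the elementary estimate $-\ln(1-x)\leq 2x$ on $[0,\tfrac12]$ (valid since $\Delta t\leq\frac{1}{2\gamma}$ by \Cref{thm:err}) gives $(1-\gamma\Delta t)^{-N_t/2}\leq e^{\gamma T}$. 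Summing the resulting geometric series of block norms along a row (and column), then applying $\|L_e^{-1}\|_2\leq\sqrt{\|L_e^{-1}\|_1\|L_e^{-1}\|_\infty}$, produces $\|L_e^{-1}\|_2=\mathcal{O}\big(\frac{e^{\gamma T}}{\gamma\Delta t}\big)$, the geometric summation being what keeps the prefactor at $\frac{1}{\gamma\Delta t}$ rather than an extraneous $N_t$.

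With both factors in hand I would assemble $\kappa=\|L_e\|_2\,\|L_e^{-1}\|_2=\mathcal{O}\big(\frac{d\,e^{\gamma T}}{\gamma h^2}\big)$ (the $\Delta t$ cancels). Substituting the parameter choices of \Cref{thm:err}, for which $\frac{\Delta t}{h^2}=\Theta(d)$ and $h^2=\Theta\big(\frac{\epsilon q}{d}\big)$ with $q=\|\rho(T)\|$, collapses this to $\kappa=\mathcal{O}\big(\frac{e^{\gamma T}}{\gamma}\frac{d^2}{q\epsilon}\big)$. Feeding $s=\mathcal{O}(d)$ and this $\kappa$ into \Cref{thm:qlsa} gives the claimed query complexity $\mathcal{O}\big(\frac{e^{\gamma T}}{\gamma}\frac{d^3}{q\epsilon}\log\frac{1}{\epsilon}\big)$, and the gate bound follows because the gate count of \Cref{thm:qlsa} exceeds its query count only logarithmically. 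The constant $\Omega(1)$ success probability is inherited from the replication in \cref{eq: qlsa_lin_sys_exd}: roughly half of the $2N_t$ solution blocks equal $\rho^{N_t}$, so post-selecting onto the index range $\{N_t,\dots,2N_t\}$ retains an $\Omega(1)$ fraction of the amplitude.

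I expect the main obstacle to be the bound on $\|L_e^{-1}\|_2$. A priori the ordered product of up to $N_t$ matrices $(A^n)^{-1}$ can grow like $(1-\gamma\Delta t)^{-N_t/2}$, and this is controlled only because \Cref{cor: Ainv} supplies a \emph{uniform} per-step estimate while the stability relation $N_t\Delta t=T$ pins the exponent at $\gamma T$; getting the prefactor right by treating the block norms as a geometric series, rather than crudely multiplying the largest block by $N_t$, is precisely what yields the clean $\frac{1}{\gamma\Delta t}$ dependence and, after substituting $h,\Delta t$, the stated $\frac{e^{\gamma T}}{\gamma}$ scaling.
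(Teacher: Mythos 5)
Your proposal is correct and follows essentially the same route as the paper's proof: instantiate \Cref{thm:qlsa} with sparsity $s=\mathcal{O}(d)$, bound $\|L\|_2$ via \Cref{cor: A}, bound $\|L^{-1}\|_2$ by writing the inverse of the block-bidiagonal system as an explicit block lower-triangular matrix of products $\prod (A^i)^{-1}$, apply the uniform per-factor bound of \Cref{cor: Ainv}, sum the resulting geometric series to get $\mathcal{O}\big(\frac{e^{\gamma T}}{\gamma\Delta t}\big)$, and then substitute the $h,\Delta t$ choices of \Cref{thm:err} to obtain $\kappa=\mathcal{O}\big(\frac{e^{\gamma T}}{\gamma}\frac{d^2}{q\epsilon}\big)$. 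The only differences are cosmetic: you control the off-diagonal blocks via $\|M\|_2\leq\sqrt{\|M\|_1\|M\|_\infty}$ and the estimate $-\ln(1-x)\leq 2x$, whereas the paper splits $L^{-1}$ into block diagonals and uses $1/\sqrt{1-\gamma\Delta t}<1+\gamma\Delta t$ together with $(1+x/n)^n\leq e^x$.
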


\begin{proof}
%The query complexity of the QLSA algorithm is $\mathcal{O}(s \kappa \log \frac{1}{\epsilon})$ where $s$ is the sparsity and $\kappa$ is the condition number. The gate complexity is larger than the query complexity only by logarithmic factors.
Recall, from \Cref{thm:qlsa}, the query complexity of the QLSA algorithm is $\mathcal{O}(s \kappa \log \frac{1}{\epsilon})$. For (\ref{eq: qlsa_lin_sys}), the sparsity is $s = \mathcal{O}(d)$. We now show that the condition number $\kappa = \| L\|_2 \| L\inv \|_2 = \mathcal{O}(e^{\gamma T} d^2 / (\gamma \epsilon))$.
\begin{itemize}
\item \textit{Bound on $\|L\|_2$}
\begin{align*}
\|L\|_2 &\leq \|I\|_2 + \max_{n=1,\dots, N_t} \|A^n\|_2 \\
& \leq 2 + 2 \, \frac{\Delta t}{h^2} \Big( 2d + h \, d \max_{n=1,\dots, N_t} C^n \Big) \\
&= 2 \Big( 1+ \frac{\Delta t}{h^2} 2d + \frac{\Delta t}{h} Cd \Big),
\end{align*}
using triangle inequality, $\|I\|_2 = 1$ and \Cref{cor: A}. Here $C = \max_{n=1,\dots, N_t} C^n >0$.

\item \textit{Bound on $\|L\inv\|_2$}:
 $L\inv$ can be written exactly in a block lower-triangular form. The $(i,j)$ block of $L\inv$ for $i\geq j$ is given by,
 \[
 (L\inv)_{i,j} = \prod_{k=i-1}^{j-1} (A^k)\inv
 \]
% \[
% L\inv = \begin{bmatrix}
% (A^0)\inv \\
% \prod_{i=1}^0 (A^i)\inv & (A^1)\inv  & & &  \\
% \prod_{i=2}^0 (A^i)\inv & \prod_{i=2}^1 (A^i)\inv & (A^2)\inv & & \makebox(0,0){\text{\huge0}}  \\
% \vdots & \vdots &\vdots  & \ddots \\
% \vdots & \vdots &\vdots &\vdots  & \ddots \\
% \prod_{i=N_t-1}^0 (A^i)\inv & \prod_{i=N_t-1}^1 (A^i)\inv  & \dots &\dots & \dots & (A^{N_t-1})\inv
% \end{bmatrix}
% \]
Then, $\|L\inv\|_2$ can be bounded using a sequence of triangle inequalities and the submultiplicativity of $\|.\|_2$.
\[\|L\inv\|_2 \leq \sum_{j=1}^{N_t} \max_{i=0, \dots, N_t-1} \|(A^i)\inv\|_2^{j}\]
% \begin{align*}
% \|L\inv\|_2 &\leq \Bigg\| \begin{bmatrix}
% (A^0)\inv \\
% & (A^1)\inv  &  &  &  \\
%  &  & (A^2)\inv & & \\
%  & & \ddots \\
% &  & && (A^{N_t-1})\inv
% \end{bmatrix}\Bigg\|_2  \\
% &+  \Bigg \| \begin{bmatrix}
% \prod_{i=1}^0 (A^i)\inv &  & & \\
%  & \prod_{i=2}^1 (A^i)\inv &   & \\
% && \ddots & \\
% &  &  &\prod_{i=N_t-1}^{N_t-2} (A^i)\inv
% \end{bmatrix}  \Bigg\|_2 + \dots + \Big\|\prod_{i=N_t-1}^0 (A^i)\inv  \Big\|_2 \\
% & \leq  \sum_{j=1}^{N_t} \max_{i=0, \dots, N_t-1} \|(A^i)\inv\|_2^{j}
% \end{align*}
% using triangle inequality and submultiplicativity of $\|.\|_2$.
From \Cref{cor: Ainv}, $\|(A^n)\inv\|_2 \leq 1/\sqrt{(1-\gamma \Delta t)} < v = 1+\gamma \Delta t$ for $\gamma \Delta t < 0.5$. Therefore,
\begin{align*}
\|L\inv\|_2 < \sum_{i=1}^{N_t} v^i &= v \frac{v^{N_t}-1}{v-1}\\
&= (1+\gamma \Delta t) \frac{(1+\gamma \Delta t)^{N_t}-1}{\gamma \Delta t} \\
& < \frac{3}{2} \frac{e^{\gamma T}}{\gamma \Delta t} \,\, \big( \because(1+x/n)^{n} \leq e^x\big).
\end{align*}
\end{itemize}

Hence, we get the following bound on the condition number $\kappa$.
\[
\kappa < 3 \frac{e^{\gamma T}}{\gamma}\Big( \frac{1}{ \Delta t}+ \frac{2d}{h^2} +  \frac{Cd}{h}  \Big).
\]
Finally, using the choice of $h,\Delta t$ from \Cref{thm:err}, we get $\kappa(L) = \mathcal{O}(e^{\gamma T} d^2 / (\gamma q \epsilon))$ leading to the desired result.
%Since, $T = \mathcal{O}(1)$ and $Q < \infty$, there exists a constant $C_1 < \infty$ such that,
%\[
%\kappa < C_1 \frac{e^{\gamma}}{\gamma }\Big( \frac{1}{\Delta t} + \frac{1}{h^2}\Big).
%\]
%The error in the numerical approximation is $\mathcal{O}(\Delta t + d\, h^2)$ and we require that $t\leq \frac{1}{2\gamma}$. If $\gamma \geq 1$, choose $\Delta t = \frac{\epsilon}{2\gamma}$ and $d\,h^2 = \epsilon - \frac{\epsilon}{2\gamma} $, so that the overall error is $\mathcal{O}(\epsilon)$.  If $\gamma < 1$, choose $\Delta t = h^2 = \epsilon/2$ to achieve the desired precision $\epsilon$. In both cases, $\kappa(L) = \mathcal{O}(e^{\gamma T} d^2 / (\gamma \epsilon))$.
\end{proof}

%\subsection{Measurement of the observables}
%At the end of the QLSA algorithm, the normalized solution state $|\rho(T)\rangle$ to \ref{eq: qlsa_lin_sys} is computed and stored to the desired precision $\epsilon$. Next, we want to compute the observable $\langle \rho(T) |M| \rho(T) \rangle$ for some measurement matrix $M$.

\begin{remark}\label{remark1}
%\textcolor{red}{Add a note on the exponential speed-up over classical methods.}
Assuming $T=\mathcal{O}(1)$, we conclude from Theorem \ref{thm: complexity}, that the QLSCCA approach has a polynomial dependence on state dimension $d$, compared to classical CC implementation which exhibits exponential dependence on the dimension.
\end{remark}

\section{Finite Difference Scheme Based Quantum Algorithm}\label{sec:FD}
Similar to QLSCCA, we construct a quantum algorithm based on an implicit finite difference scheme. 
%A positivity condition was imposed on $M^i(x,t)$ in the CC-BDF scheme. This can be restrictive for general applications. We relax this assumption by considering an implicit finite difference scheme. This requires a different set of assumptions summarized in \cref{assumption3}. A caveat of this scheme is that particle conservation is no longer satisfied even if approximate particle conserving boundary conditions are specified~\cite{cc}.
Recall that for the CC-BDF scheme a positivity condition was imposed on $M^i(x,t)$ (see Assumption \ref{assumption1}), which can be restrictive for general applications. We relax this assumption by considering an implicit finite difference scheme. This scheme requires a different set of assumptions as summarized in Assumption \ref{assumption3}, which are less restrictive. However, a drawback of this scheme is that particle conservation is no longer satisfied even if approximate particle conserving boundary conditions are specified~\cite{cc}.

\subsection{Implicit Finite Difference Scheme}
To develop a finite difference scheme, we first rewrite (\ref{eq: fp}) as follows,
\begin{align}
&\frac{\partial }{\partial t} \rho(x,t) =  \sum_{i=1}^d \frac{\partial}{\partial x_i}  \bigg[\bigg(\frac{\partial D^i(x,t)}{\partial x_i}-\mu(x,t)\bigg) \rho(x,t)\bigg] \nonumber \\
&\quad  + \sum_{i=1}^d \frac{\partial}{\partial x_i}\bigg[D^i(x,t) \frac{\partial \rho(x,t)}{\partial x_i} \bigg] \nonumber \\
&\quad = \sum_{i=1}^d \frac{\partial M^i(x,t)}{\partial x_i} \rho(x,t)  + \sum_{i=1}^d D^i(x,t) \frac{\partial^2 \rho(x,t)}{\partial x_i^2} \nonumber \\
&\quad +\sum_{i=1}^d \bigg(M^i(x,t)+ \frac{\partial D^i(x,t)}{\partial x_i} \bigg) \frac{\partial \rho(x,t)}{\partial x_i}, \label{eq: fpfd}
\end{align}
where, $M^{i}(x,t)$ is as defined in (\ref{def:M}) and we have assumed that the diffusion matrix $D$ is diagonal as before (see assumption~\ref{assumption2}).
%where $ M^{i}(x,t) = \frac{\partial D^i(x,t)}{\partial x} - \mu_i(x,t)$.

Without loss of generality, we again consider a finite domain $\Omega = [0, L]^d$. We use a central finite difference scheme for spatial discretization and a backward Euler scheme for time discretization of above equation, and refer to this scheme as FD-BDF. Let $h$ be the uniform mesh size along all dimensions  and $\Delta t$ be the time step. At a $d$ dimensional grid point $j = (j_1, j_2, \dots j_d)$, the FD-BDF scheme leads to,
\begin{align}
&\frac{\rho_j^{n+1} - \rho_j^{n}}{\Delta t} = \sum_{i=1}^d a_j^{i,n} \rho_j^{n+1} + \sum_{i=1}^d b_j^{i,n} \frac{\rho_{j+e_i}^{n+1}- \rho_{j-e_i}^{n+1}}{2h} \notag \\
&+ \sum_{i=1}^d c_j^{i,n} \frac{\rho_{j+e_i}^{n+1} - 2 \rho_j^{n+1} + \rho_{j+e_i}^{n+1} }{h^2},\label{eq:FPEfd}
\end{align}
where,
\begin{align*}
& a_j^{i,n} = \frac{\partial M^i(x,t)}{\partial x} \bigg|_{j,n}, b_j^{i,n} = M^i_{j,n} + \frac{\partial D^i(x,t)}{\partial x} \bigg|_{j,n},\\
& c_j^{i,n} = D_j^{i,n}.
\end{align*}
This scheme is second order accurate in space and first order accurate in time. One can use analytic expressions or any discretization scheme for the coefficients in the above equation. We assume a Dirichlet boundary condition with $\rho(x,t) =0$ at the boundary of $\Omega$. We  make following additional assumptions for the rest of this section. 

%We assume that our domain is large enough such that $\rho(x,t) =0$ at the boundary of $\Omega$.

% \textcolor[rgb]{1.00,0.00,0.00}{We impose zero-flux boundary condition at infinity. If we assume a large enough domain $\Omega$, this is equivalent to the Dirichlet condition: IS THAT TRUE, our domain is arbitrary large but still finite? }.
\begin{assumption}\label{assumption3}
Assume that,
\begin{itemize}
  \item There exists a constant $\gamma > 0$ such that $\sum_{i=1}^d |M^i(x+h,t) - M^i(x,t)| \leq \gamma h \quad \forall x \in \Omega$.
  \item $M^i(x,t)$ is thrice differentiable with bounded derivatives in $\Omega$.
  \item $D^i(x,t)$ is differentiable four times with bounded derivatives in $\Omega$.
  % \item There exists a constant $\eta > 0$ such that $\sum_{i=1}^d |\frac{\partial D^i(x+h,t)}{\partial x_i} - \frac{\partial D^i(x,t)}{\partial x_i}| \leq \eta h \quad \forall x \in \Omega$.
  \item $|b_j^{i,n}|L/ c_j^{i,n} \leq 2 $ at every grid point $j$ and time step $n$, and $i=1, \dots, d$.
  % \item Assume $\Delta t \leq \frac{1}{4 (\gamma + \eta)}$.
\end{itemize}
\end{assumption}
Note that we have relaxed the positivity assumption (see  \Cref{assumption1}) on $M(x,t)$ and only require it to be a Lipschitz continuous function. 
%We have imposed a similar constraint on $\nabla .\, D^i(x,t)$. 
The assumption on boundedness of derivatives of $M^i$ and $D^i$ implies that the diffusion matrix and the drift term are sufficiently smooth. The ratio $|b_j^{i,n}|L/ c_j^{i,n}$ can be interpreted as a generalization of Peclet number. Assume that $\mu$ and $D^i$ are constant, i.e. $\mu(x,t)\equiv u$ and $D^i(x,t)\equiv D$ , then $|b_j^{i,n}|L/ c_j^{i,n}$ reduces to a constant $uL/D$ which is the Peclet number measuring the ratio of advection transport rate to the diffusion transport rate \cite{patankar2018numerical}. 

%Hence, the condition $|b_j^{i,n}|L/ c_j^{i,n} \leq 2$ impies  advection transport rate is less than twice the diffusion transport rate.

\begin{theorem}
~\cite{implicit_fd_convg} If $\Delta t \leq \frac{1}{2\gamma}$, the FD-BDF scheme is stable and converges with an error of order $\mathcal{O}(dh^2 + \Delta t)$.
\end{theorem}

\subsection{Quantum Linear Systems Finite Difference Algorithm}
We express the discretized system~(\ref{eq:FPEfd}) in matrix form as follows,
\begin{equation}
\rho^n = A^n \rho^{n+1},\label{eq:FD}
\end{equation}
where $A^n$ is a $(N+1)^d \times (N+1)^d$ matrix with entries,
\begin{align} \label{eq: fd_matrix_entries}
A^n_{pq} =
\begin{cases}
-&\alpha^{i,n}_p, \quad q=p+(N+1)^{i-1}, i = 1, 2, \cdots, d,\\
&\beta^{n}_p, \quad  q=p,\\
-&\gamma^{i,n}_p, \quad q=p-(N+1)^{i-1}, i = 1, 2, \cdots, d,\\
&0, \quad \text{otherwise}
\end{cases}
\end{align}
where,
\begin{align*}
\alpha^{i,n}_p &= \frac{\Delta t}{h^2} c_j^{i,n} - \frac{\Delta t}{2h}b_j^{i,n}, \\
\beta^n_p &= 1- \Delta t \sum_{i=1}^d a_j^{i,n} + 2 \frac{\Delta t}{h^2} \sum_{i=1}^d  c_j^{i,n} , \\
\gamma^{i,n}_p &= \frac{\Delta t}{h^2} c_j^{i,n} + \frac{\Delta t}{2h}b_j^{i,n}.
\end{align*}
%\textcolor{red}{should the second term in the equation for $\beta$ have an $h$ in the denominator?}
Following, the procedure in Section \ref{subsec: QLSA}, we embed (\ref{eq:FD}) into a linear system of the form of (\ref{eq: qlsa_lin_sys}),
\begin{equation}
L \rho = f,
\end{equation}
and refer to the resulting quantum algorithm (analogous to QLSCCA),  as the quantum linear systems finite difference algorithm (QLSFDA).
%We present a error and complexity analysis for the QLSFDA following similar procedure as for QLSCCA.

\subsection{Error and Complexity Analysis of QLSFDA}
\label{sec: fd_error}
 % \textcolor[rgb]{1.00,0.00,0.00}{Should we state error analysis theorem like Theorem 3, without proof.}

Next, we present the error and complexity analysis for solving (\ref{eq:FD}) using QLSFDA. Both analyses are similar to their QLSCCA counterparts discussed in the previous section. 

\begin{theorem}\label{thm: fd_err}
Let $\rho(T)$ be the solution of the FPE~(\ref{eq: fp}) and $\frac{\rho^{N_t}_q}{\|\rho^{N_t}_q\|}$ be the solution obtained by the QLSFDA, then for every $0\leq \epsilon \leq 1$, there exist choices for $h$ and $\Delta t$, such that,
\begin{equation}\label{err_fd}
\bigg\|\frac{\rho(T)}{\|\rho(T)\|}-\frac{\rho^{N_t}_q}{\|\rho^{N_t}_q\|}\bigg\|\leq \epsilon.
\end{equation}
\end{theorem}
\begin{proof}
Let $\rho^{N_t}_{q}$ be the approximate solution of~(\ref{eq:FD}) obtained by QLSA, such that,
\begin{equation}\label{err_FD}
\bigg\|\frac{\rho^{N_t}_c}{\|\rho^{N_t}_c\|}-\frac{\rho^{N_t}_q}{\|\rho^{N_t}_q\|}\bigg\|\leq \epsilon/2,
\end{equation}
where, $\rho^{N_t}_{c}$ is the exact solution of the discretized system of equations given in~(\ref{eq: qlsa_lin_sys}). $\rho^{N_t}_{c}$ satisfies,
\begin{equation}\label{errFD}
\|\rho(T)-\rho^{N_t}_{c}\|\leq C(\Delta t+dh^2),
\end{equation}
where, without loss of generality, we assume $C>1$.
%as QLSA produces $\epsilon/2$ in solving the linear system.
%In the end we need to  show:
%\begin{equation}\label{err1}
%\bigg\|\frac{\rho(T)}{\|\rho(T)\|}-\frac{\rho^{N_t}_q}{\|\rho^{N_t}_q\|}\bigg\|\leq \epsilon
%\end{equation}
%To do that, note
We next show how to select $h$  and $\Delta t$, such that,
\begin{equation}\label{errFD1}
\frac{\|\rho(T)-\rho^{N_t}_{c}\|}{\|\rho(T) \|}\leq \frac{C(\Delta t+dh^2)}{\|\rho(T) \|}\leq \epsilon/4.
\end{equation}
%\begin{equation}\label{err11}
%\frac{\|\rho(T)-\rho^{N_t}_{c}\|}{\|\rho(T) \|}\leq \frac{C(\Delta t+dh^2)}{\|\rho(T) \|}.
%\end{equation}
We consider two cases,
\begin{itemize}
  \item If $\gamma \geq \frac{4C}{\epsilon \|\rho(T)\|}$, choose $\Delta t = \frac{1}{2\gamma}$ and $d\,h^2 = \frac{\epsilon \|\rho(T) \|}{4C} - \frac{1}{2\gamma} $.
  \item Otherwise, choose $\Delta t = dh^2 = \frac{\epsilon \|\rho(T) \|}{8C\gamma}$.
\end{itemize}
Since $\|\rho(T) \| \equiv \|\rho(T) \|_2\leq \|\rho(T) \|_1=1$, in either case, it follows that $\Delta t\leq \frac{1}{2 \gamma}$. Thus, by using~(\ref{errFD}) we conclude that inequality (\ref{errFD1}) holds. Then, we can use a similar analysis as in \Cref{thm:err} to achieve the desired result. 

\end{proof}

The query and gate complexity of QLSFDA is presented in \Cref{thm: fd_complexity} whose proof relies on \Cref{lem: fd_SDD} and \Cref{cor: fd}.

\begin{lemma} \label{lem: fd_SDD}
The matrix  $A^n$ given in (\ref{eq: fd_matrix_entries}) is SDD both in rows and columns.
\end{lemma}
\begin{proof}
To prove that $A^n$ is SDD in rows, we show that $|A^n_{pp}|- \sum_{q\neq p} |A^n_{pq}|> 0, \quad \forall p\in [1,(N+1)^d]$. Consider the column corresponding to any interior point $j$ in the mesh, then,
\begin{align*}
&|A^n_{pp}|- \sum_{q\neq p} |A^n_{pq}| = 1- \Delta t \sum_{i=1}^d a_j^{i,n} + 2 \frac{\Delta t}{h^2} \sum_{i=1}^d  c_j^{i,n} \\
&- \sum_{i=1}^d \Big|\frac{\Delta t}{h^2} c_j^{i,n} - \frac{\Delta t}{2h}b_j^{i,n} \Big| - \sum_{i=1}^d \Big|\frac{\Delta t}{h^2} c_j^{i,n} + \frac{\Delta t}{2h}b_j^{i,n}\Big| \\
&= 1 - \Delta t \sum_{i=1}^d a_j^{i,n} \quad \Big(\because \frac{|b_j^{i,n}| h}{c_j^{i,n}} \leq \frac{|b_j^{i,n}|L}{c_j^{i,n}} \leq 2 \Big) \\
& \geq 1- \gamma \Delta t >0 \quad (\because \gamma \Delta t < 1).
\end{align*}
The same result holds true for the boundary points as well. Similarly, to establish that $A^n$ is SDD in columns, we show $|A^n_{pp}|- \sum_{q\neq p} |A^n_{qp}|> 0, \quad \forall p\in [1,(N+1)^d]$. Consider the column corresponding to any interior point $j$ in the mesh, then,
% \begin{align*}
% &|A^n_{pp}|- \sum_{q\neq p} |A^n_{qp}| = 1- \Delta t \sum_{i=1}^d a_j^{i,n} + 2 \frac{\Delta t}{h^2} \sum_{i=1}^d  c_j^{i,n} \\
% &- \sum_{i=1}^d |\frac{\Delta t}{h^2} c_{j+e_i}^{i,n} - \frac{\Delta t}{2h}b_{j+e_i}^{i,n} | - \sum_{i=1}^d |\frac{\Delta t}{h^2} c_{j-e_i}^{i,n} + \frac{\Delta t}{2h}b_{j-e_i}^{i,n}| \\
% &\geq 1- \gamma \Delta t - \frac{\Delta t}{h^2} \sum_{i=1}^d \big( c_{j+e_i}^{i,n} - 2c_j^{i,n} + c_{j-e_i}^{i,n} \big) \\
% &+ \frac{\Delta t}{2h}\sum_{i=1}^d \big (b^{i,n}_{j+e_i} -   b^{i,n}_{j-e_i})\\
% & \geq 1- \gamma \Delta t - \eta \Delta t  - \Delta t (\gamma + \eta) \\
% &= 1 - 2\Delta t (\gamma + \eta) > 0 \quad (\because (\gamma + \eta )\Delta t < 1/4) 
% \end{align*}
% \red{
\begin{align*}
&|A^n_{pp}|- \sum_{q\neq p} |A^n_{qp}| = 1- \Delta t \sum_{i=1}^d a_j^{i,n} + 2 \frac{\Delta t}{h^2} \sum_{i=1}^d  c_j^{i,n} \\
&- \sum_{i=1}^d |\frac{\Delta t}{h^2} c_{j+e_i}^{i,n} - \frac{\Delta t}{2h}b_{j+e_i}^{i,n} | - \sum_{i=1}^d |\frac{\Delta t}{h^2} c_{j-e_i}^{i,n} + \frac{\Delta t}{2h}b_{j-e_i}^{i,n}| \\
&= 1- \Delta t \sum_{i=1}^d a_j^{i,n} + \frac{\Delta t}{2h}\sum_{i=1}^d \big (b^{i,n}_{j+e_i} -   b^{i,n}_{j-e_i})  \\
&- \frac{\Delta t}{h^2} \sum_{i=1}^d \big( c_{j+e_i}^{i,n} - 2c_j^{i,n} + c_{j-e_i}^{i,n} \big)\\
&= 1- \Delta t \sum_{i=1}^d \frac{\partial M^i(x,t)}{\partial x}\Big|_{j,n} \\
& - \frac{\Delta t}{h^2} \sum_{i=1}^d \big( D_{j+e_i}^{i,n} - 2D_j^{i,n} + D_{j-e_i}^{i,n} \big) \\
&+ \frac{\Delta t}{2h}\sum_{i=1}^d \bigg (M^{i,n}_{j+e_i} -   M^{i,n}_{j-e_i} \\
&\qquad\qquad + \frac{\partial D^i(x,t)}{\partial x}\Big|_{j+e_i,n} - \frac{\partial D^i(x,t)}{\partial x}\Big|_{j-e_i,n} \bigg)\\
&= 1 - \Delta t \sum_{i=1}^d \Bigg(\frac{\partial M^i(x,t)}{\partial x}\Big|_{j,n} - \frac{ M^{i,n}_{j+e_i} -   M^{i,n}_{j-e_i}}{2h} \Bigg)\\
& + \Delta t \sum_{i=1}^d \Bigg( \frac{1}{2h}\bigg(\frac{\partial D^i(x,t)}{\partial x}\Big|_{j+e_i,n} - \frac{\partial D^i(x,t)}{\partial x}\Big|_{j-e_i,n}\bigg) \\ 
&\qquad \qquad -\frac{D_{j+e_i}^{i,n} - 2D_j^{i,n} + D_{j-e_i}^{i,n}}{h^2} \Bigg) \\
&= 1 + c_1 \Delta t \,(dh^2), 
\end{align*}
where $c_1$ is a finite constant. The last equality follows from the assumption that the third and fourth derivatives of $M^i$ and $D^i$ are bounded, respectively, see Assumption \ref{assumption3}.

%If $c_1 \geq 0$, $|A^n_{pp}|- \sum_{q\neq p} |A^n_{qp}| \geq 1$. If $c_1 < 0$ and $c_1 dh^2 > - \gamma $ for the choice of $h$ given by \Cref{thm: fd_err}, then choose $dh^2 = \gamma / |c_1|$. With this, for $c_1 < 0$, $|A^n_{pp}|- \sum_{q\neq p} |A^n_{qp}| \geq 1 - \gamma \Delta t > 0$. Hence, 
%\[
%|A^n_{pp}|- \sum_{q\neq p} |A^n_{qp}| \geq 1 - \gamma \Delta t > 0
%\]

If $c_1 \geq 0$, $|A^n_{pp}|- \sum_{q\neq p} |A^n_{qp}| \geq 1$.
If $c_1 < 0$ we can always choose $dh^2 \leq \gamma / |c_1|$ which is consistent with the required conditions in \Cref{thm: fd_err}. With such a choice of $dh^2$, $|A^n_{pp}|- \sum_{q\neq p} |A^n_{qp}| \geq 1 - \gamma \Delta t > 0$. Hence, in either case,
\[
|A^n_{pp}|- \sum_{q\neq p} |A^n_{qp}| \geq 1 - \gamma \Delta t > 0.
\]
\end{proof}

\begin{corollary} \label{cor: fd}
$\|A^n \|_2 \leq 1 + 4 \frac{\Delta t}{h^2} d \, D_{\max}$ and $\|(A^n)\inv\|_2 \leq \big(1 + \gamma \Delta t\big)^2$
\end{corollary}
\begin{proof}
From the Lemma \ref{lem: fd_SDD}, $A^n$ is SDD in rows and columns. Hence,
\begin{align*}
&\|A^n\|_\infty = \max_p \left(|A^n_{pp}| + \sum_{q\neq p}|A^n_{qp}|\right) \\
&\leq 2\, \max_p |A^n_{pp}| -1 + \gamma \Delta t \leq 1 - \gamma \Delta t +  4 \frac{\Delta t}{h^2} d D_{\max}  \\
& \leq 1 +  4 \frac{\Delta t}{h^2} d D_{\max}, \\
&\| A^n \|_1 = \max_p \left(|A^n_{pp}| + \sum_{p\neq q}|A^n_{qp}|\right) \\
&\leq  2\, \max_p |A^n_{pp}| -1  + \gamma \Delta t \leq 1 +  4 \frac{\Delta t}{h^2} d D_{\max},
\end{align*}
and it follows that,
$
\|A^n\|_2 \leq \sqrt{\|A\|_\infty \|A\|_1} \leq 1 +  4 \frac{\Delta t}{h^2} d D_{\max}.
$

Using the results from~\cite{sigma_bound}  (similar to the Corollary \ref{cor: Ainv}), we obtain $\|(A^n)\inv\|_2 \leq \frac{1}{1-\gamma \Delta t } \leq  \big(1 + \gamma \Delta t\big)^2$ for $\gamma \Delta t \leq 0.5$.

% \textcolor[rgb]{1.00,0.00,0.00}{I am not able to produce two above inequalities. So
% \begin{align*}
% &|A^n_{pp}|=|\beta^n_p |\\
% &\leq 1+\Delta t \sum_{i=1}^d |a_j^{i,n}| + 2 \frac{\Delta t}{h^2} \sum_{i=1}^d  |c_j^{i,n}|\\
% &\leq 1+\Delta t \gamma+ 2\frac{d D_{max}\Delta t}{h^2}.
% \end{align*}
% If that is correct, then 
% \begin{align*}
% &\|A^n\|_\infty \leq 2\, \max_p |A^n_{pp}| -1 + \gamma \Delta t \leq 1 +3\gamma \Delta t +  4 \frac{\Delta t}{h^2} d D_{max}  \\
% \end{align*}? Overall this should not affect the final result, but may change some of the constants.
% }
\end{proof}

\begin{theorem} \label{thm: fd_complexity}
For a given error tolerance $\epsilon<1$, the query complexity of QLSFDA is,
% \[
% \mathcal{O}\bigg( \frac{e^{4T(\gamma + \eta)}}{\gamma + \eta} \frac{d^3 D_{\max}}{\epsilon}  \log \frac{1}{\epsilon}\bigg),
% \]
% \red{
\[
\mathcal{O}\bigg( \frac{e^{2\gamma T}}{\gamma } \frac{d^3}{q\epsilon}  \log \frac{1}{\epsilon}\bigg),
\]
where, $q = \|\rho(T)\|$, and it produces a normalized solution to the FPE (\ref{eq: fp}) within an $l_2$ error $\epsilon$ with a constant probability $\Omega(1)$ of success. The gate complexity is larger than the query complexity only by a logarithmic factor.
\end{theorem}
\begin{proof}
Similar to the proof of Theorem \ref{thm: complexity}, we bound the condition number $\kappa(L)=\|L\|_2 \|L\inv\|_2$ using following inequalities,
\begin{align*}
    \|L\|_2 &\leq \|I\|_2 + \max_{n=1,\dots, N_t} \|A^n\|_2 \\
& \leq 2 + 4 \frac{\Delta t}{h^2} d \, D_{\max},
\end{align*}
\begin{align*}
&\|L\inv\|_2 \leq \sum_{j=1}^{N_t} \max_{i=0, \dots, N_t-1} \| (A^i)^{-1}\|_2^j  \\
&\leq (1+ \gamma \Delta t) \frac{(1+ \gamma \Delta t)^{2N_t} -1}{\gamma \Delta t } \\
&\leq \frac{3}{2} \frac{1}{\gamma \Delta t } e^{2\gamma T}.
\end{align*}
% \textcolor{red}{Should not there be a 3/2 factor in last equation?}
Then,
\[
\kappa(L) \leq \frac{3}{2} \frac{e^{2 \gamma T}}{\gamma} \bigg(\frac{2}{\Delta t } + \frac{4d\, D_{\max}}{h^2} \bigg).
\]
Finally, using the choice of $h$, $\Delta t$ from \Cref{thm: fd_err} leads to the desired result.
\end{proof}
%Note that the Remark \ref{remark1} also applies to QLSFDA. Both the quantum algorithms developed here have similar complexity in terms time $T$, dimension $d$ and accuracy $\epsilon$. They were developed under different assumptions and hence, are applicable to different problems. 

Note that QLSFDA has similar complexity as QLSCCA in terms of time $T$, dimension $d$ and accuracy $\epsilon$, and hence the Remark \ref{remark1} also applies to QLSFDA. However, we needed different assumptions to prove these complexity results, and hence the applicability of these algorithms may differ depending on the problem. 

\section{Conclusions}\label{sec:conc}
In this paper, we have developed an approach for solving the Fokker-Planck equation associated with nonlinear SDEs on quantum platforms. Specifically, we applied the Chang-Cooper and implicit finite difference schemes to discretize the FPE in space and time, and apply the QLSA to solve the resulting linear system. We perform detailed error and complexity analysis showing that our proposed quantum algorithms using either of these schemes, under certain conditions, can provably generate the solution to the FPE within a prescribed $\epsilon$ error with polynomial dependence on the state dimension $d$. In contrast, the SOA classical numerical approaches exhibit exponential dependence on $d$. In future work, we plan to explore Hamiltonian simulation framework in~\cite{jin2022quantum}  for solving the FPE and compare with the QLSA based framework developed in this paper.

%The second approach we explored uses direct quantum Hamiltonian simulation, but can only be applied under the condition that the drift term is a gradient vector field. For such gradient flows, the FPE can be written in a self-adjoint form and can be mapped to a Schr\"{o}dinger type PDE. This PDE is discretized in space, an unitary dilation is applied and finally the resulting linear ODE is solved using quantum Hamiltonian solution using the Lie-Trotter formula. To obtain any exponential advantage this method would need further approximations which will consider in future work.

% \newpage
\bibliographystyle{IEEETran}
\bibliography{references}
\end{document}